\newtheorem{theorem}{Theorem}
\newtheorem{lemma}{Lemma}
\newtheorem{corollary}{Corollary}
\newtheorem{assum}{Assumption}
\newtheorem{remark}{Remark} 
\newcommand{\pa}{\partial}
\newcommand{\ba}{\begin{align}}
\newcommand{\ea}{\end{align}}
\newcommand{\fr}{\frac}
\newcommand{\gm}{\gamma}
\newcommand{\ep}{\varepsilon}
\newcommand{\alp}{\alpha}
\journal{Journal of \LaTeX\ Templates}
\begin{document}

\begin{frontmatter}

\title{\textbf{Single-Boundary Control of the Two-Phase Stefan System}}

\author{Shumon Koga}\corref{my}\cortext[my]{Corresponding author}\ead{skoga@eng.ucsd.edu}               
\author{Miroslav Krstic}\ead{krstic@ucsd.edu}  

\address{Department of Mechanical and Aerospace Engineering, University of California, San Diego, La Jolla, CA 92093-0411 USA}

\begin{abstract}
This paper presents the control design of the two-phase Stefan problem. The two-phase Stefan problem is a representative model of liquid-solid phase transition by describing the time evolutions of the temperature profile which is divided by subdomains of liquid and solid phases as the liquid-solid moving interface position. The mathematical formulation is given by two diffusion partial differential equations (PDEs) defined on a time-varying spatial domain described by an ordinary differential equation (ODE) driven by the Neumann boundary values of both PDE states, resulting in a nonlinear coupled PDE-ODE-PDE system. We design a state feedback control law by means of energy-shaping to stabilize the interface position to a desired setpoint by using single boundary heat input. We prove that the closed-loop system under the control law ensures some conditions for model validity and the global exponential stability estimate is shown in $L_2$ norm. Furthermore, the robustness of the closed-loop stability with respect to the uncertainties of the physical parameters is shown. Numerical simulation is provided to illustrate the good performance of the proposed control law in comparison to the control design for the one-phase Stefan problem.  \end{abstract}

\begin{keyword}
Two-phase Stefan problem \sep distributed parameter systems \sep boundary control \sep backstepping  
\MSC[2010] 00-01\sep  99-00
\end{keyword}

\end{frontmatter}


\section{Introduction} 
Liquid-solid phase transitions are physical phenomena which appear in various kinds of science and engineering processes. Representative applications include sea-ice melting and freezing~\cite{koga2019arctic}, continuous casting of steel \cite{petrus2012}, cancer treatment by cryosurgeries \cite{Rabin1998}, additive manufacturing for materials of both polymer \cite{koga2018polymer} and metal \cite{chung2004}, crystal growth~\cite{conrad_90}, lithium-ion batteries \cite{koga2017battery}, and thermal energy storage systems \cite{zalba03}. Physically, these processes are described by a temperature profile along a liquid-solid material, where the dynamics of the liquid-solid interface is influenced by the heat flux induced by melting or solidification. A mathematical model of such a physical process is called the Stefan problem\cite{Gupta03}, which is formulated by a diffusion PDE defined on a time-varying spatial domain. The domain's length dynamics is described by an ODE dependent on the Neumann boundary value of the PDE state. Apart from the thermodynamical model, the Stefan problem has been employed to model several chemical, electrical, social, and financial dynamics such as tumor growth process \cite{Friedman1999}, domain walls in ferroelectric thin films \cite{mcgilly2015}, spreading of invasive species in ecology \cite{Du2010speading}, information diffusion on social networks \cite{Lei2013}, and optimal exercise boundary of the American put option on a zero dividend asset \cite{Chen2008}.

The mathematical and numerical analysis of the Stefan problem has been widely covered in literature. The existence and uniqueness of the classical solution of the two phase Stefan problem was proven in \cite{Cannon71temp,Cannon71flux} with the temperature boundary conditions and the flux boundary conditions, respectively. Several numerical methods to solve the Stefan problem was investigated, see \cite{kutluay97} for instance. The comparison of the numerical methods was studied in \cite{Javierre06}. However, the control related problems have been considered relatively fewer. 

In \cite{Hinze07}, an optimal control approach for the solidification process described by the two-phase Stefan problem has been developed via adjoint method to track the phase interface to a prescribed desired motion. While their results showed the novelties by implementing the method for the two-dimensional system, the iterative method utilized for the optimization problem is computationally expensive and not robust to the unknown disturbances and physical parameters. For control objectives, infinite-dimensional approaches have been used for stabilization of  the temperature profile and the moving interface of a 1D one-phase Stefan problem, such as enthalpy-based feedback~\cite{petrus2012} and geometric control~\cite{maidi2014}.  These works designed control laws ensuring the asymptotical stability of the closed-loop system in the ${L}_2$ norm. However, the results in \cite{maidi2014} are established based on the assumptions on the liquid temperature being greater than the melting temperature, which must be ensured by showing the positivity of the boundary heat input.  

Recently, boundary feedback controllers for the Stefan problem have been designed via a ``backstepping transformation" \cite{krstic2008boundary,andrew2004} which has been used for many other classes of infinite-dimensional systems. For instance, \cite{Shumon16} designed a state feedback control law by introducing a nonlinear backstepping transformation for moving boundary PDE, which achieved the exponentially stabilization of the closed-loop system in the ${\mathcal H}_1$ norm without imposing any {\em a priori} assumption. Based on the technique, \cite{Shumon16CDC} designed an observer-based output feedback control law for the Stefan problem, \cite{Shumon19journal} extended the results in \cite{Shumon16, Shumon16CDC} by studying the robustness with respect to the physical parameters and developed an analogous design with Dirichlet boundary actuation, \cite{Shumon17ACC} designed a state feedback control for the Stefan problem under the material's convection, \cite{koga_2019delay} developed a control design with time-delay in the actuator and proved a delay-robustness, and \cite{koga2019iss} investigated an input-to-state stability of the control of Stefan problem with respect to an unknown heat loss. 

In this paper, a full-state feedback control law for the stabilization of the two-phase Stefan problem at a reference setpoint is studied. First, we state some assumptions and lemmas to guarantee the validity of the physical model required for the existence and uniqueness of the solution. Next, we design the control law by means of energy-shaping to satisfy the conditions of physical model. Then, we introduce a change of variable to absorb the solid phase dynamics to the ODE state to obtain the similar structure as the one-phase Stefan problem, and apply the backstepping transformation as in \cite{Shumon19journal}. The associated target system is shown to satisfy an exponential stability estimate in the ${ L}_{2}$ norm through the Lyapunov analysis. 

This paper is organized as follows. In Section \ref{sec:problem}, the two-phase Stefan problem is presented with stating some important remarks. Section \ref{sec:control} introduces the control problem statement and the control design via the energy-shaping and proves some conditions for the model validity under the closed-loop system. The main theorem of the stability of the closed-loop system and its proof are presented in Section \ref{sec:stability}. The robustness of the proposed control under uncertainty of the physical parameters is shown in Section \ref{sec:robust}. Supportive numerical simulations are provided in Section \ref{sec:simulation}. The paper ends with some final remarks and future directions  in  Section \ref{sec:conclusion}.

\section{Problem Statement}\label{sec:problem}
\begin{figure}[t]
\centering
\includegraphics[width=2.5in]{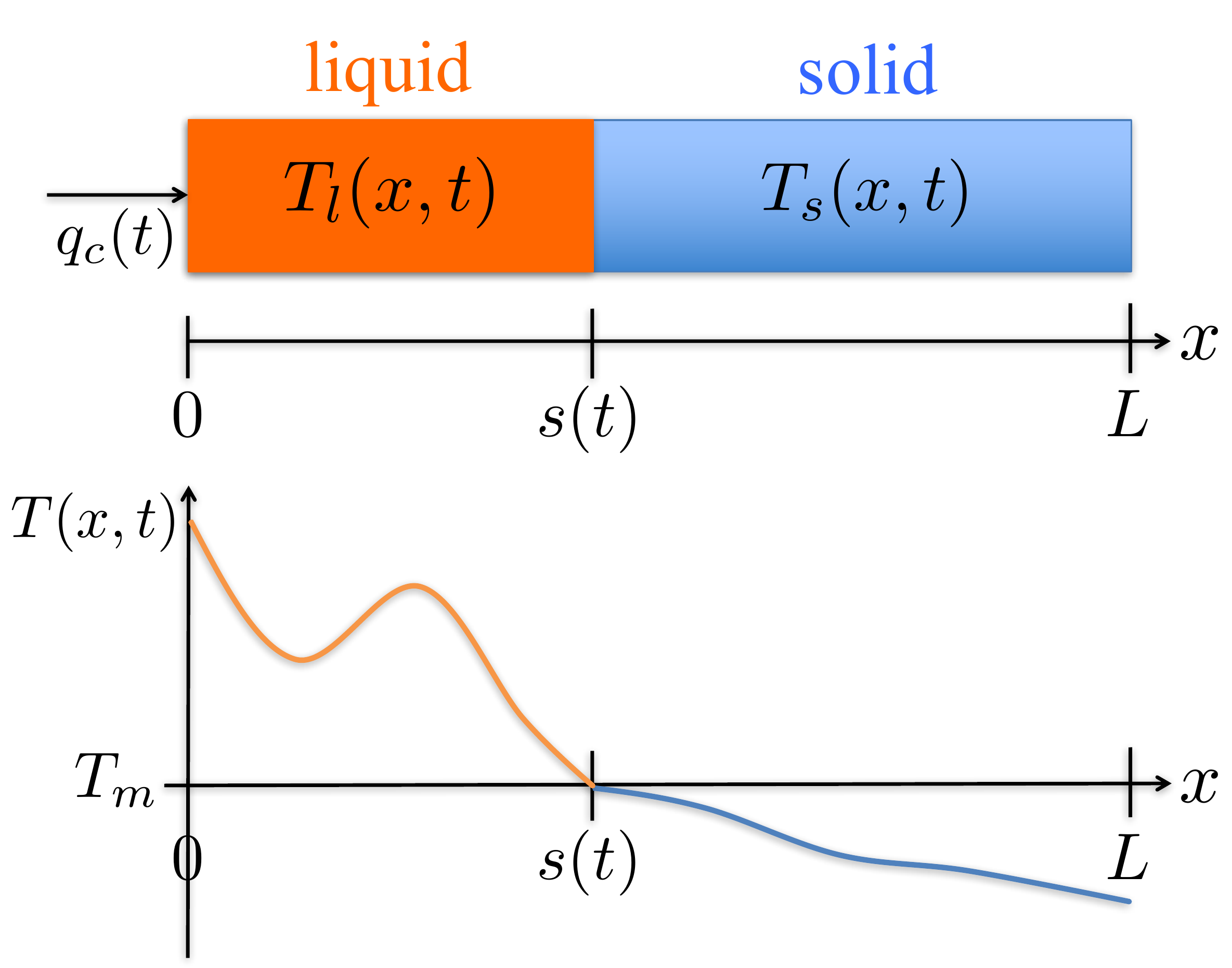}\\
\caption{Schematic of the two-phase Stefan problem.}
\label{fig:twophase}
\end{figure}

\subsection{Two-phase Stefan problem} 
The two-phase Stefan problem describes the thermodynamic model of the phase change phenomena such as melting or freezing (solidification) process in a pure material. The dynamics of the process depends strongly  on the evolution in time of the moving interface (here reduced to a point)  at which phase transition from liquid to solid (or equivalently, in the reverse direction) occurs. In this paper, we consider the one dimensional model with the material's length $L$, and the material's domain $x \in [0, L]$ is separated into two complementary time-varying sub-domains $x \in [0, s(t)]$ and $x \in [s(t), L]$ which are occupied by the liquid phase and the solid phase, respectively, as depicted in Fig. \ref{fig:twophase}. While the results in \cite{Shumon19journal} dealt with the one-phase Stefan problem by assuming the temperature in the solid phase to be steady-state, the two-phase Stefan problem describes both melting and solidification process by considering the temperature dynamics of both phases. Let $T_{{\rm l}}(x,t)$ and $T_{{\rm s}}(x,t)$ be the temperature profiles of liquid and solid, respectively, and $s(t)$ be the position of the interface between liquid and solid. Then, the energy conservation and heat transfer laws give the following PDE-ODE model of the temperature profile
\begin{align}\label{sys1}
 \fr{\pa T_{{\rm l}}}{\pa t}(x,t) =&\alpha_{{\rm l}}  \fr{\pa^2 T_{{\rm l}}}{\pa x^2}(x,t), \quad 0<x<s(t),\\
 \fr{\pa T_{{\rm s}}}{\pa t}(x,t) =&\alpha_{{\rm s}}  \fr{\pa^2 T_{{\rm s}}}{\pa x^2}(x,t), \quad s(t)<x<L,\label{sys2} \\
  \fr{\pa T_{{\rm l}}}{\pa x}(0,t) =& - \fr{q_{{\rm c}}(t)}{k_{{\rm l}}}, \quad \fr{\pa T_{{\rm s}}}{\pa x}(L,t) = 0, \label{BC:control}\\
T_{{\rm l}}(s(t),t) = &T_{{\rm m}},\quad T_{{\rm s}}(s(t),t) =T_{{\rm m}} \label{sys4} \\
\label{sys5} \gm \dot{s}(t) =& - k_{{\rm l}} \fr{\pa T_{{\rm l}}}{\pa x}(s(t),t)+k_{{\rm s}} \fr{\pa T_{{\rm s}}}{\pa x}(s(t),t),
\end{align}
with the initial data $T_{{\rm l},0}(x) := T_{{\rm l}}(x,0)$, $T_{{\rm s},0} (x) := T_{{\rm s}}(x,0)$, $s_0 := s(0)$, where $q_{{\rm c}}(t)>0$ is a boundary heat input. Here, $\alp_{i} = \fr{k_{i}}{\rho_i c_i}$, where $\rho_i$, $c_i$, $k_i$ for $i\in$\{l, s\} are the density, the heat capacity, the thermal conductivity, and the heat transfer coefficient, respectively and the subscripts ``l" and ``s" are associated to the liquid or solid phase, respectively. Also, $\gm = \rho_{{\rm l}} \Delta H^*$ where $\Delta H^*$ denotes the latent heat of fusion.

\subsection{Conditions to validate the physical model} \label{sec:valid} 

There are underlying assumptions to validate the model \eqref{sys1}-\eqref{sys5}. First, the liquid phase is not frozen to the solid phase from the boundary $x=0$. This condition is ensured if the liquid temperature $T_{{\rm l}}(x,t)$ is greater than the melting temperature $T_{{\rm m}}$. Second, in a similar manner, the solid phase is not melt to the liquid phase from the boundary $x=L$, which is ensured if the solid temperature $T_{{\rm s}}(x,t)$ is less than the melting temperature. Third, the material is not completely melt or frozen to single phase through the disappearance of the other phase. This condition is guaranteed if the interface position remains inside the material's domain. In addition, these conditions are also required for the well-posedness (existence and uniqueness) of the solution in this model. Taking into account of these model validity conditions, we emphasize the following remark. 
\begin{remark}\label{rem:valid}\emph{
To keep the physical state of each phase meaningful, the following conditions must be maintained: 
\begin{align}\label{valid1}
T_{{\rm l}}(x,t) \geq& T_{{\rm m}}, \quad \forall x\in(0,s(t)), \quad \forall t>0, \\
\label{valid2}T_{{\rm s}}(x,t) \leq& T_{{\rm m}}, \quad \forall x\in(s(t),L), \quad \forall t>0, \\
\label{valid3} 0< &s(t)<L, \quad \forall t>0. 
\end{align}}
\end{remark}

For model validity, we state the following assumption and lemma. 
\begin{assum}\label{initial} 
$0<s_0<L$, $T_{{\rm l},0}(x)$ and $T_{{\rm s},0}(x)$ are piecewise continuous functions, and there exist Lipschitz constants $ H_{{\rm l}}>0$ and $ H_{{\rm s}}>0$ such that 
\begin{align}
T_{{\rm m}}\leq T_{{\rm l},0}(x) \leq T_{{\rm m}} + H_{{\rm l}}(s_0 - x), \quad \forall x\in[0,s_0], \\
T_{{\rm m}}\geq T_{{\rm s},0}(x) \geq T_{{\rm m}} + H_{{\rm s}}(s_0 - x), \quad \forall x\in[s_0,L].
\end{align}
 \end{assum}
 \begin{lemma}\label{valid} 
Under Assumption \ref{initial}, and provided that $q_{{\rm c}}(t)$ is a piecewise continuous function that satisfies 
\begin{align} \label{qmqf} 
q_{{\rm c}}(t)\geq 0,\quad  \forall t \in [0, t^*), 
\end{align} 
there exists a finite time $\overline t:= \sup_{t \in (0, t^*)}\{t | s(t) \in (0,L)\}>0$ such that a classical solution to \eqref{sys1}--\eqref{sys5} exists, is unique, and satisfies the model validity condition \eqref{valid1}--\eqref{valid3} for all $t \in (0,\overline t)$. Moreover, if $t^* = \infty$ and it holds 
\begin{align} \label{validcondition} 
0< \gm s_{\infty} + \int_0^{t} q_{{\rm c}}(s) ds < \gm L ,
\end{align}
for all $t \geq 0$, where 
\begin{align}  \label{sinf}
s_{\infty}:= s_0 + \frac{k_{{\rm l}}}{\alpha_{{\rm l}} \gm} \int_{0}^{s_0} (T_{{\rm l},0}(x) - T_{{\rm m}}) {\rm d}x  + \frac{k_{{\rm s}}}{\alpha_{{\rm s}}\gm} \int_{s_0}^{L} (T_{{\rm s},0}(x) - T_{{\rm m}}) {\rm d}x	, 
\end{align}
then $\overline t = \infty$, namely, the well-posedness and the model validity conditions are satisfied for all $t \geq 0$.  
 \end{lemma}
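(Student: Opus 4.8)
The plan is to establish the local result first and then bootstrap to the global one. For the local part, the existence, uniqueness, and regularity of a classical solution under Assumption \ref{initial} and the sign condition \eqref{qmqf} can be invoked from the classical Stefan-problem literature (e.g. \cite{Cannon71flux}), at least on a short time interval where $s(t)$ stays strictly inside $(0,L)$. The more substantive part is the model-validity inequalities \eqref{valid1}--\eqref{valid3}, which I would obtain by the maximum principle applied separately on the liquid and solid subdomains. Specifically, on $0<x<s(t)$ the function $T_{\rm l}-T_{\rm m}$ satisfies the heat equation with a non-positive-flux-free boundary at $x=0$ (since $q_{\rm c}\ge 0$ forces $\pa_x T_{\rm l}(0,t)\le 0$), vanishing Dirichlet data at $x=s(t)$, and nonnegative initial data; Hopf's lemma / the strong maximum principle then give $T_{\rm l}\ge T_{\rm m}$, i.e. \eqref{valid1}. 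Symmetrically, $T_{\rm m}-T_{\rm s}\ge 0$ on the solid side gives \eqref{valid2}. Then $\pa_x T_{\rm l}(s(t),t)\le 0$ and $\pa_x T_{\rm s}(s(t),t)\le 0$ via Hopf, so from \eqref{sys5} we get $\gm\dot s(t) = -k_{\rm l}\pa_x T_{\rm l}(s(t),t) + k_{\rm s}\pa_x T_{\rm s}(s(t),t)$, whose sign is not fixed — this is exactly the two-phase feature — but in any case $s$ stays continuous, so $\overline t := \sup\{t : s(t)\in(0,L)\}>0$. This defines the finite-time local statement.

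For the global part, the key is an energy identity. I would multiply \eqref{sys1} by $k_{\rm l}/\alpha_{\rm l}$ and integrate over $(0,s(t))$, multiply \eqref{sys2} by $k_{\rm s}/\alpha_{\rm s}$ and integrate over $(s(t),L)$, use the boundary conditions \eqref{BC:control}--\eqref{sys4}, the Leibniz rule for the moving boundary, and the Stefan condition \eqref{sys5}. The boundary terms at $x=s(t)$ coming from integration by parts combine with the $\dot s$ terms and the interface temperature condition to telescope, leaving
\begin{align}
\frac{d}{dt}\left( \gm s(t) + \frac{k_{\rm l}}{\alpha_{\rm l}}\int_0^{s(t)}(T_{\rm l}-T_{\rm m})\,{\rm d}x + \frac{k_{\rm s}}{\alpha_{\rm s}}\int_{s(t)}^{L}(T_{\rm s}-T_{\rm m})\,{\rm d}x \right) = q_{\rm c}(t).
\end{align}
Integrating in time and recognizing the bracket at $t=0$ as $\gm s_\infty$ by the definition \eqref{sinf}, we obtain $\gm s(t) = \gm s_\infty + \int_0^t q_{\rm c}(\tau)\,{\rm d}\tau - \big(\text{liquid energy integral}\big) - \big(\text{solid energy integral}\big)$. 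By \eqref{valid1} the liquid integral is $\ge 0$ and by \eqref{valid2} the solid integral is $\le 0$; combining these bounds with the hypothesis \eqref{validcondition} should pin $\gm s(t)$ strictly between $0$ and $\gm L$, hence $s(t)\in(0,L)$ for all $t\ge0$, i.e. the interface never reaches either boundary. A standard continuation argument then forbids finite-time blow-up or degeneration, so $\overline t=\infty$.

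The main obstacle I anticipate is the last step — showing rigorously that $s(t)\in(0,L)$ for all finite $t$ actually upgrades to $\overline t=\infty$, because one must rule out the possibility that $s(t)\to 0$ or $s(t)\to L$ only asymptotically or that the solution loses regularity while $s$ stays interior. This requires a careful continuation/open–closed argument: on any interval where $s$ stays in a compact subinterval $[\delta, L-\delta]$ the classical solution extends, and the energy identity together with \eqref{validcondition} provides, via the sign of the two energy integrals, uniform two-sided bounds on $s(t)$ away from $0$ and $L$ — but making the bound \emph{uniform} (not just strict for each $t$) on any given finite interval is where one has to be slightly careful, using continuity of $t\mapsto \gm s_\infty + \int_0^t q_{\rm c}$ and its strict containment in $(0,\gm L)$ on compact time sets. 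A secondary technical point is the validity of the maximum principle and Hopf's lemma on the time-varying, a priori only Lipschitz, domain $(0,s(t))$; this is handled by the same regularity theory that gives the classical solution, and by approximating with smooth data if needed, but it deserves an explicit remark.
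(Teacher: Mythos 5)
Your overall architecture matches what the paper actually does, which is very little: the paper does not prove this lemma at all, but invokes Cannon and Primicerio \cite{Cannon71flux} (Theorem 1 and Theorem 4 there), noting only that the argument rests on the maximum principle. Your local part (classical well-posedness from the literature, maximum principle and Hopf's lemma on each subdomain to get \eqref{valid1}--\eqref{valid2}, continuity of $s$ to get $\overline t>0$) is consistent with that citation, and your energy identity is correct --- it is exactly the conservation law \eqref{Et}--\eqref{energy} that the paper itself derives later for the control design, with the bracket at $t=0$ equal to $\gamma s_\infty$ by \eqref{sinf}.

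The genuine gap is in your global step. Writing $\gamma s(t) = E(t) - I_{\rm l}(t) - I_{\rm s}(t)$ with $E(t) = \gamma s_\infty + \int_0^t q_{\rm c}(\tau)\,{\rm d}\tau$, $I_{\rm l}(t) = \frac{k_{\rm l}}{\alpha_{\rm l}}\int_0^{s(t)}(T_{\rm l}-T_{\rm m})\,{\rm d}x \ge 0$ and $I_{\rm s}(t) = \frac{k_{\rm s}}{\alpha_{\rm s}}\int_{s(t)}^{L}(T_{\rm s}-T_{\rm m})\,{\rm d}x \le 0$, the claim that these signs together with \eqref{validcondition} ``pin $\gamma s(t)$ strictly between $0$ and $\gamma L$'' does not follow: the upper bound $\gamma s(t)<\gamma L$ is threatened precisely by the nonnegative term $-I_{\rm s}(t)$, and the lower bound by the nonpositive term $-I_{\rm l}(t)$, so the sign information alone is inconclusive in both directions. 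The argument that actually closes this (and is the one behind the cited theorems) is a first-hitting-time contradiction: if $t_1$ is the first time with $s(t_1)=L$, then the solid subdomain has shrunk to a point, so $I_{\rm s}(t_1)=0$ and $\gamma L = E(t_1)-I_{\rm l}(t_1) \le E(t_1) < \gamma L$, a contradiction; symmetrically, $s(t_1)=0$ forces $I_{\rm l}(t_1)=0$ and $0 = E(t_1)-I_{\rm s}(t_1) \ge E(t_1) > 0$. This is where the strict inequalities in \eqref{validcondition} are actually consumed, and it is the step missing from your sketch. By contrast, the difficulty you flag --- upgrading ``$s(t)\in(0,L)$ for each $t$'' to $\overline t=\infty$ against a merely asymptotic approach to the boundary --- is secondary: with $\overline t$ defined as the supremum of times with $s(t)\in(0,L)$, excluding a finite-time hit of $0$ or $L$ is exactly what is needed, and the continuation of the classical solution under that condition is supplied by the global theorem of \cite{Cannon71flux} rather than by a uniform-in-$t$ bound you would have to manufacture yourself.
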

 
Lemma \ref{valid} is proven in \cite{Cannon71flux} (Theorem 1 in p.4 and Theorem 4 in p.8) by employing the maximum principle. The variable $s_{\infty}$ defined in \eqref{sinf} is the final interface position $s_{\infty} = \lim_{t \to \infty} s(t)$ under the zero input $q_{{\rm c}}(t) \equiv 0$ for all $t \geq 0$.  For \eqref{validcondition} to hold for all $t \geq 0$, we at least require it to hold at $t = 0$, which leads to the following assumption. 
\begin{assum}\label{ass:E0}
The initial conditions that appear in $s_{\infty}$ in \eqref{sinf} satisfy 
\begin{align} 
0< s_{\infty} < L . 
\end{align}
\end{assum}


\section{State Feedback Control for Two-Phase Stefan Problem} \label{sec:control}


\subsection{Control problem statement and energy-shaping design} 
In this paper, we design the boundary heat input $q_{{\rm c}}(t)$ for the asymptotical stabilization of the interface position $s(t)$ at a desired reference setpoint $s_{{\mathrm r}}$. The steady-state solution for the temperature profiles at the desired setpoint of the system \eqref{sys1}--\eqref{sys5} is given by uniform melting temperature $T_{{\rm m}}$ for both liquid and solid phases. Thus, the control objective is to achieve the following convergences: 
\begin{align}
\lim_{t\to \infty} s(t) &= s_{{\mathrm r}},\label{c1}\\
\lim_{t\to \infty} T_{{\rm l}}(x,t) &= T_{{\mathrm m}}, \quad \lim_{t\to \infty} T_{{\rm s}}(x,t) = T_{{\mathrm m}}.\label{c2}
\end{align}
We approach to this problem by means of \emph{energy shaping control}, that is originally developed for underactuated mechanical systems such as robot manipulators \cite{Fantoni2000}. The thermal internal energy of the total system in \eqref{sys1}--\eqref{sys5} is given by 
\begin{align}\label{Et}
E(t) =& \frac{k_{{\rm l}}}{\alpha_{{\rm l}}} \int_{0}^{s(t)} (T_{{\rm l}}(x,t) - T_{{\rm m}}) {\rm d}x \notag\\
&+ \frac{k_{{\rm s}}}{\alpha_{{\rm s}}} \int_{s(t)}^{L} (T_{{\rm s}}(x,t) - T_{{\rm m}}) {\rm d}x + \gm s(t) , 
\end{align}
which includes the specific heat of both liquid and solid phases and the latent heat. Taking the time derivative of \eqref{Et} along the solution of \eqref{sys1}--\eqref{sys5}, one can obtain the energy conservation law formulated as  
\begin{align} \label{energy}
\frac{d}{dt} E(t) = q_{{\rm c}}(t) . 
\end{align} 
To achieve the control objective given by the conditions \eqref{c1}--\eqref{c2}, the internal energy \eqref{Et} must converges to the following setpoint energy  
\begin{align}\label{internal}
\lim_{t\to \infty} E(t) = \gm s_{{\rm r}}.
\end{align}
Taking the time integration of \eqref{energy} from $t=0$ to $\infty$, and imposing the input constraint \eqref{qmqf} required for the model validity as stated in Lemma \ref{valid}, in order to achieve \eqref{internal} we deduce that the following restriction on the setpoint \emph{neccesary}: 
\begin{assum}\label{assum2}
The setpoint $s_{\rm r}$ is chosen to satisfy
\begin{align} \label{setpoint}
s_{\infty} < s_{{\rm r}} < L,  
\end{align} 
where $s_{\infty}$ is defined in \eqref{sinf}. 
\end{assum}

With Assumption \ref{assum2}, due to the energy conservation \eqref{energy}, the following control law
\begin{align}
q_{{\rm c}}(t)&= -c ( E(t) - E_{{\rm r}}), \\
&= - c\left(\frac{k_{{\rm l}}}{\alpha_{{\rm l}}} \int_{0}^{s(t)} (T_{{\rm l}}(x,t) - T_{{\rm m}}) {\rm d}x \right. \notag\\
& \left. + \frac{k_{{\rm s}}}{\alpha_{{\rm s}}} \int_{s(t)}^{L} (T_{{\rm s}}(x,t) - T_{{\rm m}}) {\rm d}x +  \gamma (s(t) - s_{{\rm r}}) \right), \label{Fullcontrol}
\end{align}
drives the internal energy $E(t)$ to the reference energy $E_{{\rm r}}$. We study some properties of the closed-loop system under the control law \eqref{Fullcontrol}. 

\subsection{Some conditions required for model validity}

As addressed in Remark \ref{rem:valid}, the two-phase Stefan problem given by \eqref{sys1}--\eqref{sys5} must satisfy the conditions \eqref{valid1}--\eqref{valid3} under the state-feedback control law \eqref{Fullcontrol}. We state the following lemma. 
\begin{lemma} \label{lem:closed-valid} 
Under Assumptions \ref{initial}--\ref{assum2}, the closed-loop system of the plant \eqref{sys1}--\eqref{sys5} with control law \eqref{Fullcontrol} has a unique classical solution that satisfies the conditions \eqref{valid1}--\eqref{valid3} for model validity. Furthermore, it holds that 
\begin{align} \label{flux-valid} 
	\fr{\pa T_{{\rm l}}}{\pa x}(s(t),t) \leq 0, \quad \fr{\pa T_{{\rm s}}}{\pa x}(s(t),t) \leq 0, \quad \forall t \geq 0. 
\end{align}

\end{lemma}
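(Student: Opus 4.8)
The plan is to reduce the closed-loop analysis to a scalar linear ODE for the internal energy $E(t)$ and then invoke Lemma \ref{valid}. First I would observe that \eqref{Fullcontrol} is precisely $q_{{\rm c}}(t) = -c(E(t) - E_{{\rm r}})$ with $E_{{\rm r}} = \gm s_{{\rm r}}$, and that comparing \eqref{Et} evaluated at $t = 0$ with the definition \eqref{sinf} gives the key identity $E(0) = \gm s_{\infty}$. Along any classical solution, \eqref{energy} then yields the closed-loop energy dynamics $\dot E(t) = -c(E(t) - E_{{\rm r}})$, so $E(t) = \gm s_{{\rm r}} - \gm(s_{{\rm r}} - s_{\infty}) e^{-ct}$ and hence $q_{{\rm c}}(t) = c\gm(s_{{\rm r}} - s_{\infty}) e^{-ct}$. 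Since $s_{\infty} < s_{{\rm r}}$ by Assumption \ref{assum2}, this shows $q_{{\rm c}}(t) > 0$ on the entire interval of existence, so the positivity hypothesis \eqref{qmqf} required by Lemma \ref{valid} is met.

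Next I would verify the global validity condition \eqref{validcondition}. Integrating \eqref{energy} gives $\gm s_{\infty} + \int_0^t q_{{\rm c}}(\tau)\,d\tau = \gm s_{\infty} + E(t) - E(0) = E(t)$, and from the explicit formula $E(t) = \gm s_{{\rm r}} - \gm(s_{{\rm r}} - s_{\infty}) e^{-ct}$, which increases monotonically from $\gm s_{\infty}$ toward $\gm s_{{\rm r}}$, we obtain $\gm s_{\infty} \le E(t) < \gm s_{{\rm r}}$. Because $s_{\infty} > 0$ (Assumption \ref{ass:E0}) and $s_{{\rm r}} < L$ (Assumption \ref{assum2}), this yields $0 < E(t) < \gm L$, i.e.\ \eqref{validcondition} holds for all $t \ge 0$.

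To close the well-posedness cleanly and sidestep the circularity that $q_{{\rm c}}$ is defined through the very solution one is constructing, I would take the explicit function $\bar q_{{\rm c}}(t) := c\gm(s_{{\rm r}} - s_{\infty}) e^{-ct}$ as an \emph{open-loop} input: it is smooth, nonnegative, and satisfies \eqref{validcondition} by the computation above, so Lemma \ref{valid} (with $t^* = \infty$) furnishes a unique global classical solution obeying \eqref{valid1}--\eqref{valid3} for all $t \ge 0$. A short check shows that along this solution $\dot E = \bar q_{{\rm c}}$ and $E(0) = \gm s_{\infty}$, which force $-c(E(t) - E_{{\rm r}}) = \bar q_{{\rm c}}(t)$; hence this solution is in fact the closed-loop solution, establishing the first assertion. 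Finally, the flux inequalities \eqref{flux-valid} follow from the validity bounds together with the interface conditions \eqref{sys4}: for $x < s(t)$ one has $T_{{\rm l}}(x,t) \ge T_{{\rm m}} = T_{{\rm l}}(s(t),t)$, so the one-sided quotient $(T_{{\rm l}}(s(t),t) - T_{{\rm l}}(x,t))/(s(t) - x) \le 0$, and letting $x \uparrow s(t)$ gives $\pa_x T_{{\rm l}}(s(t),t) \le 0$; symmetrically $T_{{\rm s}}(x,t) \le T_{{\rm m}} = T_{{\rm s}}(s(t),t)$ for $x > s(t)$ gives $\pa_x T_{{\rm s}}(s(t),t) \le 0$.

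The only genuinely delicate point is the self-consistency and well-posedness of the closed loop, which is why I would route it through the explicit open-loop input $\bar q_{{\rm c}}$ rather than a bootstrap/continuation argument; everything else is the elementary scalar ODE computation, a direct appeal to Lemma \ref{valid}, and the maximum-principle bounds that lemma already supplies.
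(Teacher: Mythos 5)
Your proposal is correct and follows essentially the same route as the paper: derive the explicit closed-loop input $q_{\rm c}(t)=c\gm(s_{\rm r}-s_\infty)e^{-ct}$ from the energy balance \eqref{energy} with $E(0)=\gm s_\infty$, check positivity \eqref{qmqf} and the condition \eqref{validcondition} via Assumptions \ref{ass:E0}--\ref{assum2}, and then invoke Lemma \ref{valid}. Your two refinements are both sound and arguably cleaner than the paper's text: the open-loop reformulation with $\bar q_{\rm c}$ removes the circularity that the paper glosses over when it ``solves'' for the feedback, and the one-sided difference-quotient argument at $x=s(t)$ (using \eqref{valid1}--\eqref{valid2} and \eqref{sys4}) delivers exactly the non-strict inequalities \eqref{flux-valid} without appealing to Hopf's lemma as the paper does.
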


\begin{proof}
By the energy-conservation \eqref{energy} and control law given by \eqref{Fullcontrol}, one can derive that the closed-loop system has the explicit solution for the state feedback control $q_{{\rm c}}(t)$ given by
\begin{align}  \label{qcsol}
q_{{\rm c}}(t) = c \gm (s_{{\rm r}} -s_{\infty} ) e^{-ct}. 
\end{align}
By Assumption \ref{assum2}, the solution \eqref{qcsol} yields the positivity of the heat input, i.e.,  \eqref{qmqf}. Moreover, substituting \eqref{qcsol} into the middle equation in \eqref{validcondition}, one can further show that the closed-loop system satisfies the inequalities \eqref{validcondition}, and thereby applying Lemma \ref{valid} leads to the well-posedness and the model validity conditions \eqref{valid1}-\eqref{valid3} to hold for all $t \geq 0$. Applying Hopf's lemma to the conditions \eqref{valid1}-\eqref{valid3} together with the boundary condition \eqref{sys4} leads to the conditions \eqref{flux-valid}. 
\end{proof}

\section{Stability Analysis}\label{sec:stability}

While the energy shaping method is utilized for the control design to stabilize the system's energy with ensuring the model validity conditions, the stability of the closed-loop system is proven by employing the backstepping method and Lyapunov analysis. We state our main theorem as follows. 

\begin{theorem}\label{Theo-1}
Under Assumptions \ref{initial}--\ref{assum2}, the closed-loop system consisting of the plant \eqref{sys1}--\eqref{sys5} and the control law \eqref{Fullcontrol} where $c>0$ is an arbitrary controller gain, maintains the conditions \eqref{valid1}--\eqref{valid3}, and there exists a positive constant $M>0$ such that the following exponential stability estimate holds: 
\begin{align}\label{H1norm}
& \Psi (t) \leq M  \Psi(0) e^{ - dt } 
\end{align}
for all $t\geq 0$, where $d = \frac{1}{2} \min\left\{ \fr{\alpha_{{\rm l}}}{2 L^2},  \fr{\alpha_{{\rm s}}}{ L^2}, c\right\} $, in the $L_2$-norm 
\begin{align} \label{Psidef}  
\Psi(t) =&  \int_{0}^{s(t)} (T_{{\rm l}}(x,t) - T_{{\rm m}})^2 {\rm d}x + \int_{s(t)}^{L} (T_{{\rm s}}(x,t) - T_{{\rm m}})^2 {\rm d}x \notag\\
&+ (s(t) - s_{{\rm r}})^2 . 
\end{align} 

\end{theorem}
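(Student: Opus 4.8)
The plan is to reduce the two-phase problem to a one-phase-like structure via a change of variables, apply the backstepping transformation from \cite{Shumon19journal}, and then run a Lyapunov argument on the target system. First I would introduce a new ODE variable that absorbs the solid-phase heat content, for instance $X(t) := s(t) - s_{\rm r} + \frac{k_{\rm s}}{\alpha_{\rm s}\gm}\int_{s(t)}^{L}(T_{\rm s}(x,t)-T_{\rm m}){\rm d}x$, whose derivative—using \eqref{sys2}, \eqref{BC:control}, \eqref{sys4}, and \eqref{sys5}—collapses to $\gm\dot X(t) = -k_{\rm l}\frac{\pa T_{\rm l}}{\pa x}(s(t),t)$, exactly the one-phase interface dynamics driven only by the liquid-phase flux. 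Together with \eqref{sys1}, the boundary conditions \eqref{BC:control}, \eqref{sys4}, and the known closed-loop input \eqref{qcsol}, this gives a PDE-ODE cascade isomorphic to the one-phase Stefan system treated in \cite{Shumon19journal}.

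Next I would apply the nonlinear backstepping transformation of \cite{Shumon19journal} to the liquid subsystem: introduce reference-error variables $u(x,t) := T_{\rm l}(x,t) - T_{\rm m}$ and the spatial transformation $w(x,t) = u(x,t) - \frac{\beta}{\alpha_{\rm l}}\int_{x}^{s(t)}(x-y)u(y,t){\rm d}y - \frac{\beta}{\alpha_{\rm l}}\frac{k_{\rm l}}{\gm}? $ type gain kernel chosen so that the $(w,X)$ dynamics form a target system with a strict Lyapunov functional—an exponentially stable heat equation for $w$ coupled to $\dot X = -\lam X + (\text{boundary term in }w)$. One must check the transformed Neumann boundary condition at $x=0$ matches the control \eqref{Fullcontrol}; since the energy-shaping law and the backstepping law are algebraically different, the reconciliation step is to show that under the closed-loop solution \eqref{qcsol} the backstepping boundary feedback is automatically satisfied (or differs only by an exponentially decaying term), so that the target-system structure still holds. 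I would also record the inverse transformation $u \mapsto w$ and its boundedness, which is needed to transfer the estimate back.

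Then I would define the Lyapunov functional $V(t) = \frac{a}{2}\|w\|_{L_2(0,s(t))}^2 + \frac{b}{2}\|T_{\rm s}-T_{\rm m}\|_{L_2(s(t),L)}^2 + \frac{p}{2}X(t)^2$ with constants $a,b,p>0$ to be tuned, differentiate along the target/plant dynamics, integrate by parts using the homogeneous Neumann conditions at $x=0$ (for $w$) and $x=L$ (for $T_{\rm s}$), use $T_{\rm l}(s,t)=T_{\rm s}(s,t)=T_{\rm m}$ to kill interface terms, and invoke Poincaré/Agmon inequalities (with the $s(t)\le L$ bound from Lemma \ref{lem:closed-valid}, and $s(t)\ge \underline s>0$ likewise guaranteed) to dominate the boundary cross-terms. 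The moving-boundary terms $\dot s(t)\,w(s,t)^2$ etc. vanish because the integrand is zero at $x=s(t)$. This yields $\dot V \le -2d\,V$ with $d=\frac12\min\{\frac{\alpha_{\rm l}}{2L^2},\frac{\alpha_{\rm s}}{L^2},c\}$, hence $V(t)\le V(0)e^{-2dt}$. Finally I would convert back: $\Psi(t)$ is equivalent to $\|u\|^2 + \|T_{\rm s}-T_{\rm m}\|^2 + (s-s_{\rm r})^2$, and using the boundedness of the direct/inverse backstepping maps and the boundedness of the correction term relating $X$ to $(s-s_{\rm r})$ and the solid-phase integral (again via Cauchy–Schwarz and $L$ bounded), one obtains $c_1\Psi \le V \le c_2\Psi$, so $\Psi(t)\le (c_2/c_1)\Psi(0)e^{-2dt}$, giving \eqref{H1norm} with $M = c_2/c_1$.

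The main obstacle I anticipate is the reconciliation between the energy-shaping controller \eqref{Fullcontrol} and the backstepping controller: the Lyapunov/target-system machinery of \cite{Shumon19journal} is built around a specific boundary feedback, whereas here $q_{\rm c}$ is prescribed by energy shaping. The key is that \eqref{qcsol} shows $q_{\rm c}(t)$ decays exponentially at rate $c$, so I expect the argument to go through by either (i) showing the two control laws coincide on the closed-loop trajectory, or more likely (ii) treating the discrepancy as an exponentially decaying forcing term in the target-system boundary condition and absorbing it into $V$ via Young's inequality, which is exactly what forces the $c$ in the definition of $d$. A secondary technical point is establishing the uniform lower bound $s(t)\ge \underline s>0$ needed for the Poincaré constants; this should follow from \eqref{validcondition} in Lemma \ref{valid} together with Assumption \ref{assum2}, since the liquid-phase energy content stays controlled and $s_\infty>0$.
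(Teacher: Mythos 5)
Your setup (the variable $X(t)$ in \eqref{Xdef} absorbing the solid-phase heat content, the exponential decay of the solid phase by itself, and the final norm-equivalence step) matches the paper, but the core of your Lyapunov argument has a genuine gap at exactly the point where the two-phase problem differs from the one-phase one. With any kernel of the form used in \cite{Shumon19journal}, the transformed liquid subsystem is \emph{not} an exponentially stable heat equation with only boundary coupling: the time derivative of the transformation produces the forcing term $\frac{c}{\beta_{\rm l}}\dot{s}(t)X(t)$ in the target PDE \eqref{eq:tarPDE1} and the trace term $-\beta_{\rm l}w_x(s(t),t)$ in the ODE \eqref{eq:tarODE1}. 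Your claim that ``the moving-boundary terms vanish because the integrand is zero at $x=s(t)$'' misses both of these. The trace term $X(t)w_x(s(t),t)$ cannot be absorbed by $\|w_x\|^2$ and $X^2$ in an $L_2$ functional at all; the paper kills it by shifting the kernel to $\phi(x)=\frac{1}{\beta_{\rm l}}(cx-\ep)$, which forces $w(s(t),t)=\frac{\ep}{\beta_{\rm l}}X(t)$ in \eqref{eq:tarBC11} and makes the boundary term from integration by parts cancel the ODE trace term exactly (at the price of the non-homogeneous condition \eqref{tarBC22} at $x=0$, handled via the inverse transformation, Agmon's inequality, and smallness of $\ep$). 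The $\dot{s}X\int_0^s w$ term is sign-indefinite precisely because $\dot{s}$ changes sign in the two-phase problem (the one-phase proofs you invoke rely on $\dot s>0$ and an $\mathcal{H}_1$ functional), so Poincar\'e/Agmon cannot dominate it and you cannot get $\dot V\le -2dV$ directly. The paper's resolution is the second idea you are missing: using the Hopf-lemma signs \eqref{flux-valid} and Lemma \ref{lem:Xt}, the quantity $z(t)$ in \eqref{ztdef} is negative and nondecreasing with $|\dot s|\le \dot z$, which yields $\dot V\le -bV + a\dot z V$ as in \eqref{Vdot9}, and the weighted functional $W=Ve^{-az}$ then delivers the exponential estimate with a constant depending on $z(0)$.

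Your ``reconciliation'' concern is also resolved differently than you anticipate. Rewritten in the $(u,X)$ variables, the energy-shaping law \eqref{Fullcontrol} \emph{is} the backstepping-type feedback \eqref{qcfbk2}; no exponentially decaying mismatch needs to be absorbed (your option (ii)), and using the open-loop expression \eqref{qcsol} inside the Lyapunov analysis is not how the proof goes. The only discrepancy created by the $\ep$-shifted kernel is the boundary term $w_x(0,t)=-\frac{\ep}{\alpha_{\rm l}}u(0,t)$, which is a state-dependent (not decaying) perturbation handled by choosing $\ep$ small. Finally, no uniform lower bound $s(t)\ge \underline{s}>0$ is needed: Poincar\'e is applied on $(0,s(t))$ and $(s(t),L)$ with constants depending only on the upper bound $L$, which is exactly what Lemma \ref{lem:closed-valid} guarantees.
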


	Due to the requirement of Assumption \ref{ass:E0}, the designed control ensures the closed-loop stability only when the initial condition does not cause a disappearance of one phase through complete melting or freezing under zero heat input.
%

For the setpoint position $s_{{\rm r}}$ violating Assumption \ref{assum2}, the control problem should be replaced from heating the liquid phase to cooling the solid phase. Mathematically, the boundary conditions \eqref{BC:control} are replaced by 
\begin{align} 
	  \fr{\pa T_{{\rm l}}}{\pa x}(0,t) =& 0, \quad \fr{\pa T_{{\rm s}}}{\pa x}(L,t) = \fr{q_{{\rm c}}(t)}{k_{{\rm s}}}, \label{BC:control2}
\end{align}
where the condition \eqref{qmqf} for the control input in Lemma \ref{valid} is replaced by $q_{{\rm c}}(t) \leq 0$ serving as cooling the solid phase. Then, owing to the symmetry of the system's structure, it is straightforward to show that Theorem \ref{Theo-1} is equivalent to the following corollary. 
\begin{corollary}
	Under Assumptions \ref{initial}--\ref{ass:E0}, and assuming $s_{{\rm r}} \in (0, s_{\infty})$, the closed-loop system consisting of the plant \eqref{sys1}--\eqref{sys2}, \eqref{sys4}--\eqref{sys5}, with boundary conditions \eqref{BC:control2}, and the control law \eqref{Fullcontrol}, maintains the conditions \eqref{valid1}--\eqref{valid3}, and there exists a positive constant $M>0$ such that the stability estimate \eqref{H1norm} holds in the $L_2$-norm \eqref{Psidef}.  
\end{corollary}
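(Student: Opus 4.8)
The plan is to prove the corollary by exhibiting an explicit \emph{spatial-reflection isomorphism} that maps the cooling problem onto the heating problem already settled by Theorem~\ref{Theo-1}, and then pulling back the conclusion. Concretely, I would introduce
\[
  \tilde{x} = L - x, \qquad \tilde{s}(t) = L - s(t),
\]
\[
  \tilde{T}_{{\rm l}}(\tilde x,t) = 2T_{{\rm m}} - T_{{\rm s}}(L-\tilde x,t), \qquad
  \tilde{T}_{{\rm s}}(\tilde x,t) = 2T_{{\rm m}} - T_{{\rm l}}(L-\tilde x,t),
\]
together with the parameter relabeling $(\tilde\alpha_{{\rm l}},\tilde\alpha_{{\rm s}},\tilde k_{{\rm l}},\tilde k_{{\rm s}},\tilde\gamma)=(\alpha_{{\rm s}},\alpha_{{\rm l}},k_{{\rm s}},k_{{\rm l}},\gamma)$ and the input $\tilde q_{{\rm c}}(t) = -q_{{\rm c}}(t)$. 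The first step is to verify by direct substitution that $(\tilde T_{{\rm l}},\tilde T_{{\rm s}},\tilde s)$ solves the plant \eqref{sys1}--\eqref{sys5} with these relabeled parameters: the double reflection leaves the sign of the diffusion operator unchanged, so the two heat equations hold; the boundary conditions \eqref{BC:control2} become precisely \eqref{BC:control} for the tilde system, the sign flip $\tilde q_{{\rm c}}=-q_{{\rm c}}$ being exactly what makes the Neumann data match, and since $q_{{\rm c}}\le 0$ we get $\tilde q_{{\rm c}}\ge 0$ consistent with \eqref{qmqf}; the interface conditions $\tilde T_{{\rm l}}(\tilde s,t)=\tilde T_{{\rm s}}(\tilde s,t)=T_{{\rm m}}$ follow from $T_{{\rm l}}(s,t)=T_{{\rm s}}(s,t)=T_{{\rm m}}$; and the Stefan condition \eqref{sys5} is preserved because $\dot{\tilde s}=-\dot s$ while the two flux terms interchange with a compensating sign.

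Second, I would check that the feedback law \eqref{Fullcontrol} is mapped to itself. Performing the change of variable $x=L-\tilde x$ in the energy functional \eqref{Et} shows that the internal energy of the tilde system satisfies $\tilde E(t) = \gamma L - E(t)$; hence, setting $\tilde s_{{\rm r}}:=L-s_{{\rm r}}$ and $\tilde E_{{\rm r}}:=\tilde\gamma\tilde s_{{\rm r}}$, we obtain $\tilde E(t)-\tilde E_{{\rm r}} = -(E(t)-E_{{\rm r}})$, so that
\[
  \tilde q_{{\rm c}}(t) = -q_{{\rm c}}(t) = c\,(E(t)-E_{{\rm r}}) = -c\,(\tilde E(t)-\tilde E_{{\rm r}}),
\]
i.e. the tilde system is driven by exactly the control law \eqref{Fullcontrol} with setpoint $\tilde s_{{\rm r}}$. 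Comparing \eqref{sinf} with \eqref{Et} gives $s_\infty=E(0)/\gamma$, hence $\tilde s_\infty=\tilde E(0)/\tilde\gamma=L-s_\infty$; therefore Assumption~\ref{ass:E0} for the tilde data is equivalent to $0<s_\infty<L$, and the setpoint requirement \eqref{setpoint} for the tilde data, namely $\tilde s_\infty<\tilde s_{{\rm r}}<L$, becomes precisely $0<s_{{\rm r}}<s_\infty$, the hypothesis of the corollary. I would also note that the Lipschitz-type bounds of Assumption~\ref{initial} for $(\tilde T_{{\rm l},0},\tilde T_{{\rm s},0},\tilde s_0)$ follow from those for $(T_{{\rm l},0},T_{{\rm s},0},s_0)$ with the constants swapped ($\tilde H_{{\rm l}}=H_{{\rm s}}$, $\tilde H_{{\rm s}}=H_{{\rm l}}$), which is immediate once one reflects the temperature about $T_{{\rm m}}$.

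Third, I would invoke Theorem~\ref{Theo-1} for the tilde system. It delivers the model-validity conditions \eqref{valid1}--\eqref{valid3} for $(\tilde T_{{\rm l}},\tilde T_{{\rm s}},\tilde s)$ and the estimate $\tilde\Psi(t)\le M\tilde\Psi(0)e^{-\tilde d t}$ with $\tilde d=\tfrac12\min\{\tilde\alpha_{{\rm l}}/(2L^2),\tilde\alpha_{{\rm s}}/L^2,c\}>0$. Pulling these back: the validity conditions for the tilde system are, term by term, exactly the conditions \eqref{valid1}--\eqref{valid3} for the original $(T_{{\rm l}},T_{{\rm s}},s)$ (the temperature reflection exchanges $\{\tilde T_{{\rm l}}\ge T_{{\rm m}}\}$ with $\{T_{{\rm s}}\le T_{{\rm m}}\}$ and conversely, and sends $0<\tilde s<L$ to $0<s<L$), while reflection invariance of the $L_2$ norms together with $(\tilde s-\tilde s_{{\rm r}})^2=(s-s_{{\rm r}})^2$ gives $\tilde\Psi(t)=\Psi(t)$; this yields \eqref{H1norm} (with $d$ replaced by $\tilde d$, which is again positive). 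The only genuinely delicate part of the argument is arranging all the signs and parameter identifications to be simultaneously consistent — in particular recognizing that one must reflect the temperature about $T_{{\rm m}}$ rather than merely reflect space, so that the ``new liquid'' temperature remains above $T_{{\rm m}}$ and the admissible cooling input $q_{{\rm c}}\le 0$ becomes an admissible heating input $\tilde q_{{\rm c}}\ge 0$; once the correct transformation is written down, every verification above reduces to a one-line substitution.
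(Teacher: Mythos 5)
Your proposal is correct and follows exactly the route the paper intends: the paper dismisses the corollary with a one-line appeal to ``the symmetry of the system's structure,'' and your reflection map $\tilde x = L-x$, $\tilde T = 2T_{\rm m}-T$, $\tilde q_{\rm c}=-q_{\rm c}$ with swapped phase parameters is precisely that symmetry made explicit, with all hypotheses (Assumptions \ref{initial}--\ref{ass:E0}, the setpoint condition $\tilde s_\infty<\tilde s_{\rm r}<L \Leftrightarrow 0<s_{\rm r}<s_\infty$, and $\tilde\Psi=\Psi$) correctly transferred. Your observation that the decay rate becomes $\tfrac12\min\{\alpha_{\rm s}/(2L^2),\alpha_{\rm l}/L^2,c\}$, i.e.\ with the roles of $\alpha_{\rm l}$ and $\alpha_{\rm s}$ interchanged relative to $d$ in \eqref{H1norm}, is a valid refinement the paper leaves implicit.
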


The proof of Theorem \ref{Theo-1} is established through several steps in the remainder of this section. 
 
\subsection{Error variables relative to melting temperature} 
We first introduce some change of variables so that the stabilization at the zero profiles should be achieved. Let $u(x,t)$, $v(x,t)$ be reference error temperature profiles of the liquid and the solid phase, respectively, defined as 
\begin{align}
u(x,t) = T_{\rm l} (x,t)-T_{{\rm m}}, \quad v(x,t) = T_{\rm s}(x,t)-T_{{\rm m}} . 
\end{align}
Then the system \eqref{sys1}--\eqref{sys5} is rewritten as 
\ba \label{u-sys1}
 u_{t}(x,t) =& \alpha_{{\rm l}} u_{xx}(x,t), \quad 0<x<s(t)\\
\label{u-sys2} u_x(0,t) =& - q_{{\rm c}}(t)/k_{{\rm l}}, \quad  u(s(t),t) = 0,\\
\label{vPDE} v_{t}(x,t) =& \alpha_{{\rm s}} v_{xx}(x,t), \quad s(t)<x<L\\
\label{vst}v_x(L,t) =& 0, \quad v(s(t),t) =  0, \\
 \dot{s}(t) =& - \beta_{\rm l} u_x(s(t),t)+ \beta_{\rm s} v_x(s(t),t), \label{u-sys5}
\end{align}
where $\beta_{i} = \frac{k_{i}}{\gm}$ for $i \in $\{l,s\}. The system \eqref{u-sys1}--\eqref{u-sys5} shows the two PDEs coupling with the ODE describing the moving boundary. The stabilization of states $(u,v,s)$ at $(0,0, s_{{\rm r}})$ is aimed by designing the control law, however, the multiple PDEs are difficult to deal with as themselves in general.  

\subsection{Change of variable to absorb the solid phase into the interface} 
To reduce the complexity of the system's structure in \eqref{u-sys1}--\eqref{u-sys5}, we introduce another change of variable. Let $X(t)$ be a state variable defined by 
\begin{align}\label{Xdef}
X(t) = s(t) - s_{{\rm r}} + \fr{\beta_{\rm s}}{\alpha_{{\rm s}}} \int_{s(t)}^{L} v(x,t) {\rm d}x . 
\end{align}
Taking the time derivative of \eqref{Xdef} and with the help of \eqref{vPDE}--\eqref{u-sys5}, we get $\dot{X}(t) =-  \beta_{\rm l} u_x(s(t),t)$ which eliminates $v$-dependency in ODE dynamics \eqref{u-sys5}. Thus, $(u,v,s)$-system in \eqref{u-sys1}--\eqref{u-sys5} can be reduced to $(u,X)$-system as  
\begin{align}\label{uX-sys1}
 u_{t}(x,t) =& \alpha_{{\rm l}} u_{xx}(x,t), \quad 0<x<s(t)\\
\label{uX-BC}u_x(0,t) = &- q_{{\rm c}}(t)/k_{{\rm l}}, \quad  u(s(t),t) = 0,\\
\dot{X}(t) =& - \beta_{\rm l} u_x(s(t),t). \label{uX-sys3}
\end{align}
Therefore, the control problem is now recast as designing the boundary control $q_{{\rm c}}(t)$ in \eqref{uX-BC} to stabilize the $(u,X)$-system in \eqref{uX-sys1}--\eqref{uX-sys3} at the zero states $(0,0)$, which is equivalent to the problem of stabilization of the one-phase Stefan problem studied in \cite{Shumon16}. The main difference with \cite{Shumon16} is that the monotonicity of the velocity of the moving interface, i.e. $\dot{s}(t)>0$, is not guaranteed in the two-phase Stefan problem due to the reversible melting and freezing process. This property was the key for stability proof in \cite{Shumon16}, and hence the same method cannot be directly applied to the two-phase problem considered in this paper. To resolve the issue, we modify the gain kernel function of the backstepping method for analyzing $L_2$ stability of the associated target system as in the next sections.  

\begin{remark}\label{rem_isidori} \emph{
Considering the output \eqref{Xdef} of the state-space system \eqref{u-sys1}--\eqref{u-sys5}, the equations \eqref{uX-sys1}--\eqref{uX-sys3}, along with \eqref{vPDE}, \eqref{vst}, can be considered as the system's input-output ``normal form" in the sense of Byrnes and Isidori~\cite{BI88,Isidori95} (see Chapter 4 in \cite{Isidori95}). As indicated above, a control design will be conducted for the input-output dynamics \eqref{uX-sys1}--\eqref{uX-sys3}, i.e., the dynamics of the liquid phase with a modified interface output map $X=h(s,v)$ using the control $q_{\rm c}$. The stability of the inverse dynamics \eqref{vPDE}, \eqref{vst}, which happen to be the dynamics of the solid phase, is studied in Section \ref{sec:stability-solid}. }
\end{remark}

\subsection{Backstepping transformation} 
Consider the following backstepping transformation and the gain kernel function $\phi$ given by
\begin{align}\label{bkst}
w(x,t)=&u(x,t)-\frac{\beta_{\rm l}}{\alpha_{{\rm l}}} \int_{x}^{s(t)} \phi (x-y)u(y,t) dy \notag\\
&-\phi(x-s(t)) X(t), \\
\phi(x) =& \frac{1}{\beta_{\rm l}} (cx- \ep) , 
\end{align} 
where $\ep >0$ is a parameter to be determined in the stability analysis. Taking the spatial and the time derivative of \eqref{bkst} along the solution of \eqref{uX-sys1}--\eqref{uX-sys3}, the associated target system is derived as
\begin{align}\label{eq:tarPDE1}
w_t(x,t)=&\alpha_{{\rm l}} w_{xx}(x,t)+ \frac{c}{\beta_{\rm l}}\dot{s}(t) X(t), \\
\label{eq:tarBC11} w(s(t),t) =& \frac{\ep}{\beta_{\rm l}} X(t), \\
\label{eq:tarODE1}\dot{X}(t)=&-cX(t)- \beta_{\rm l}w_x(s(t),t). 
\end{align}
Taking the derivative of \eqref{bkst} in $x$, we obtain 
\begin{align}\label{wx}
w_{x}(x,t)=&u_{x}(x,t)- \fr{\ep }{\alpha_l} u(x,t) \notag\\
&-\frac{c}{\alpha_{{\rm l}}} \int_{x}^{s(t)} u(y,t) dy- \frac{c}{\beta_{\rm l}} X(t). 
\end{align}
For a standard backstepping procedure, the boundary condition at $x=0$ of the target system leads to the control design. If we chose $w_{x}(0,t) = 0$, we obtain a stable target system in the case of fixed domain. However, the control design derived from $   w_{x}(0,t) = 0$ does not ensure the positivity and the required conditions addressed in Lemma \ref{valid1}. As proposed in Section \ref{sec:control}, we design the control law \eqref{Fullcontrol} by means of energy shaping to guarantee the required conditions, which is rewritten as  
\begin{align} 
q_{{\rm c}}(t) =& -c \left( \fr{k_{{\rm l}}}{\alpha_{{\rm l}}}  \int_{0}^{s(t)} u(y,t) dy +  \frac{k_{{\rm l}}}{\beta_{\rm l}}X(t) \right), \label{qcfbk2}
\end{align}
with respect to $X(t)$, and obtain the boundary condition of the target system. Setting $x=0$ in \eqref{wx} and applying \eqref{qcfbk2}, the boundary condition at $x=0$ is obtained by 
\begin{align}\label{eq:tarBC21}
w_{x}(0,t) = - \fr{\ep }{\alpha_l} u(0,t), 
\end{align}
of which the right hand side should be rewritten with respect to $(w, X)$ after we derive the inverse transformation. Here, we note the following lemma. 
\begin{lemma} \label{lem:Xt}
	It holds $X(t) \leq 0$ for all $t \geq 0$. 
\end{lemma}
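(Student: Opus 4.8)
The plan is to prove $X(t)\le 0$ not by a monotonicity or limiting argument (which would circularly invoke the stability asserted in Theorem \ref{Theo-1}) but through an explicit algebraic identity relating $X(t)$ to the internal energy $E(t)$ of \eqref{Et} and to the liquid heat content, both of which are available in closed form under the control law.

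First I would rewrite the output $X(t)$ defined in \eqref{Xdef} in ``energy variables''. Multiplying \eqref{Xdef} by $\gm$ and using $\gm\beta_{\rm s}=k_{\rm s}$ gives $\gm X(t)=\gm\left(s(t)-s_{\rm r}\right)+\frac{k_{\rm s}}{\alpha_{\rm s}}\int_{s(t)}^{L}\left(T_{\rm s}(x,t)-T_{\rm m}\right){\rm d}x$, and comparing this with the definition \eqref{Et} of $E(t)$ yields the identity $\gm X(t)=E(t)-\gm s_{\rm r}-\frac{k_{\rm l}}{\alpha_{\rm l}}\int_{0}^{s(t)}\left(T_{\rm l}(x,t)-T_{\rm m}\right){\rm d}x$.

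Next I would evaluate $E(t)$ explicitly: integrating the energy balance \eqref{energy} with the closed-loop input \eqref{qcsol} gives $E(t)=E(0)+\gm(s_{\rm r}-s_{\infty})(1-e^{-ct})$, while comparing \eqref{Et} at $t=0$ with the definition \eqref{sinf} of $s_{\infty}$ shows $E(0)=\gm s_{\infty}$, hence $E(t)-\gm s_{\rm r}=-\gm(s_{\rm r}-s_{\infty})e^{-ct}$. Substituting into the identity above, $\gm X(t)=-\gm(s_{\rm r}-s_{\infty})e^{-ct}-\frac{k_{\rm l}}{\alpha_{\rm l}}\int_{0}^{s(t)}\left(T_{\rm l}(x,t)-T_{\rm m}\right){\rm d}x$. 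The first term is nonpositive by Assumption \ref{assum2} ($s_{\infty}<s_{\rm r}$), and the integral is nonnegative since Lemma \ref{lem:closed-valid} guarantees the model validity condition \eqref{valid1}, i.e. $T_{\rm l}(x,t)\ge T_{\rm m}$ on $(0,s(t))$. As $\gm>0$, this gives $X(t)\le 0$ for all $t\ge 0$.

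I do not expect a genuine obstacle here; the only point requiring care is spotting the identity $\gm X=E-\gm s_{\rm r}-\frac{k_{\rm l}}{\alpha_{\rm l}}\int_0^{s}(T_{\rm l}-T_{\rm m}){\rm d}x$ and verifying $E(0)=\gm s_{\infty}$ directly from \eqref{sinf}, after which the sign follows purely from already-established facts (the model validity \eqref{valid1} and the setpoint restriction \eqref{setpoint}). A tempting alternative — using $\dot X(t)=-\beta_{\rm l}u_x(s(t),t)\ge 0$ from \eqref{flux-valid} together with $\lim_{t\to\infty}X(t)\le 0$ — is weaker because it presupposes knowledge of the limit of $X$, which is precisely part of what the main theorem must establish.
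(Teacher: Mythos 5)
Your proof is correct and is essentially the paper's own argument in different notation: the identity $\gm X(t)=E(t)-\gm s_{\rm r}-\tfrac{k_{\rm l}}{\alpha_{\rm l}}\int_0^{s(t)}(T_{\rm l}(x,t)-T_{\rm m})\,{\rm d}x$ together with $E(t)-\gm s_{\rm r}=-q_{\rm c}(t)/c$ is exactly the control law written in $(u,X)$-coordinates, i.e.\ \eqref{qcfbk2}, and the paper concludes $X(t)\le 0$ by applying \eqref{valid1} and the input positivity \eqref{qmqf} (guaranteed in closed loop by \eqref{qcsol} and Assumption \ref{assum2}) to that equation. Your detour through the explicit formula for $E(t)$ and the check $E(0)=\gm s_{\infty}$ merely re-derives \eqref{qcsol}, so the two arguments coincide in substance.
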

Lemma \ref{lem:Xt} is shown straightforwardly by applying \eqref{valid1} and \eqref{qmqf} to \eqref{qcfbk2}.

\subsection{Inverse transformation} 
Suppose that the inverse transformation is described by the following formulation
\begin{align}\label{inv}
u(x,t)=&w(x,t)-\frac{\beta_{\rm l}}{\alpha_{{\rm l}}} \int_{x}^{s(t)} \psi (x-y)w(y,t) dy \notag\\
&-\psi(x-s(t)) X(t), 
\end{align}
where $\psi$ is a gain kernel function to be determined. Taking derivatives of \eqref{inv} in $x$ and $t$ along the solution of the target system \eqref{eq:tarPDE1}--\eqref{eq:tarODE1}, in order to satisfy \eqref{uX-sys1}--\eqref{uX-sys3}, one can show that the gain kernel function must satisfy the following 
\begin{align}\label{psiODE}
 \alpha_{{\rm l}} \psi''(x) -   \ep  \psi' (x)   + c \psi(x)   = 0, \\
\label{psiBC} \psi(0) = \frac{\ep}{\beta_{\rm l}} , \quad \psi'(0) = \frac{\ep ^2}{\alpha_{{\rm l}} \beta_{\rm l}} - \frac{c}{\beta_{\rm l}}. 
\end{align}
Supposing $\ep < \sqrt{2 \alpha_{{\rm l}} c}$, the solution to the differential equation \eqref{psiODE}--\eqref{psiBC} is derived as  
\begin{align}\label{psisol}
\psi(x) = e^{  r x } \left( p_1 \sin\left( \omega x \right) + p_2 \cos\left( \omega x \right) \right) , 
\end{align}
where $
r = \fr{ \ep }{2 \alpha_{{\rm l}}}$, $\omega = \sqrt{\fr{4 \alpha_{{\rm l}} c - \ep ^2 }{4 \alpha_{{\rm l}}^2 } }$ , $p_1  = - \fr{1}{2 \alpha_{{\rm l}} \beta_{\rm l}\omega } \left( 2 \alpha_{{\rm l}} c - \ep^2 \right) $, and $p_2 = \frac{\ep}{\beta_{\rm l}} $. 
The solution \eqref{psisol} satisfies 
\begin{align} \label{psiineq} 
\psi(x) ^2  \leq  \frac{4 \alpha_{{\rm l}} c}{\beta_{\rm l}^2} e^{ 2 r x },
\end{align} 
which is used in the stability analysis. Finally, by applying the inverse transfotmation, the boundary condition \eqref{eq:tarBC21} is described by only with respect to the target state $(w,X)$ as  
\begin{align}\label{tarBC22}
w_{x}(0,t) = & - \frac{\ep }{\alpha_{{\rm l}}} \left[  w(0,t)-\frac{\beta_{\rm l}}{\alpha_{{\rm l}}} \int_{0}^{s(t)} \psi (-y)w(y,t) dy \right. \notag\\
&\left. -\psi(-s(t)) X(t) \right] . 
\end{align}
Therefore, the target $(w, X)$-system is written as \eqref{eq:tarPDE1}--\eqref{eq:tarODE1} and \eqref{tarBC22} as a closed form. Note that this target $(w, X)$-system is not a standard choice due to its complicated structure through the coupling between each state. Nevertheless, the target system is proven to satisfy the exponential stability estimate in $L_2$ norm in the next section with the help of the properties in Lemma \ref{lem:closed-valid}.

\subsection{Lyapunov method} 
Due to the invertibility of the transformations \eqref{bkst} and \eqref{inv}, the $(u,X)$-system in \eqref{uX-sys1}--\eqref{uX-sys3} with the control law \eqref{qcfbk2} has the equivalent stability property with the target $(w, X)$-system in \eqref{eq:tarPDE1}--\eqref{eq:tarODE1}, \eqref{tarBC22}. Owing to the stabilizing term $- cX(t)$ in \eqref{eq:tarODE1}, the exponential stability of $(w, X)$-system is shown via Lyapunov analysis using the conditions verified in the lemmas, and the stability of the overall system is proven. 

\subsection{Stability analysis for the liquid with modified interface} 

Consider the following Lyapunov functional
\begin{align}\label{Vdef}
V (t)= &\fr{1}{2\alpha_{{\rm l}} } || w ||^2  + \fr{\ep}{2 \beta_{\rm l}^2} X(t)^2,  
\end{align}
where the $L_2$ norm is denoted as $|| w || : = \sqrt{ \int_0^{s(t)} w(x,t)^2 {\rm d}x} $. Taking the time derivative of \eqref{Vdef} along the solution of \eqref{eq:tarPDE1}--\eqref{eq:tarODE1}, \eqref{tarBC22}, we get 
\begin{align} 
\dot{V}(t)=&  - || w_{x}||^2  - \ep \frac{c}{\beta_{\rm l}^2}X(t)^2  +\frac{\ep}{\alpha_{{\rm l}}} w(0,t)^2  \notag\\
& -\frac{\ep}{\alpha_{{\rm l}}} w(0,t)\left[ \frac{\beta_{\rm l}}{\alpha_{{\rm l}}} \int_{0}^{s(t)} \psi (-y)w(y,t) dy\right.  \notag\\  
&\left. + \psi(-s(t)) X(t) \right] \notag\\
\label{Vdot1}&+\fr{ \dot{s}(t)}{2 \alpha_{{\rm l}}} \left( \left(\frac{\ep}{\beta_{\rm l}} X(t)\right)^2 +  2 \frac{c}{\beta_{\rm l}} \int_0^{s(t)} w(x,t) {\rm d}x X(t)  \right) .
\end{align} 
 Applying Young's, Cauchy Schwarz, Poincare, and Agmon's inequalities with the help of $0<s(t)<L$, and the inequality of $\psi$ in \eqref{psiineq}, we get the following  
 \begin{align}\label{Vdot4}
 \dot{V} (t)\leq &  - \left( 1 - \frac{2 \ep L}{\alpha_{{\rm l}}} \left( 3 + \fr{32 c L^2}{\alpha_{{\rm l}}} \right) \right) || w_{x}||^2 \notag\\
 & - \frac{\ep}{\beta_{\rm l}^2} \left(\fr{c}{2} - \frac{ \ep^2}{\alpha_{{\rm l}}} \left( 3 + \fr{32 c L^2}{\alpha_{{\rm l}}} \right) \right)X(t)^2  \notag\\
&+\fr{ \dot{s}(t)}{2 \alpha_{{\rm l}}} \left( \left(\frac{\ep}{\beta_{\rm l}} X(t)\right)^2 +  2 \frac{c}{\beta_{\rm l}} \int_0^{s(t)} w(x,t) {\rm d}x X(t)  \right)  .
\end{align}
Let us choose $\ep$ to satisfy $ \ep < \ep_{1} := \frac{\alpha_{{\rm l}}}{4 L  \left( 3 + \fr{32 c L^2}{\alpha_{{\rm l}}} \right) }$. Then, applying Poincare's inequality to the first term of \eqref{Vdot4} again, we get 
\begin{align}\label{Vdot6}
 \dot{V} (t)\leq &  - \fr{1}{8L^2} || w||^2  - \frac{\ep}{\beta_{\rm l}^2} \left(\fr{c}{4} + g(\ep)\right)X(t)^2  \notag\\
&+\fr{ \dot{s}(t)}{2 \alpha_{{\rm l}}} \left( \left(\frac{\ep}{\beta_{\rm l}} X(t)\right)^2 +  2 \frac{c}{\beta_{\rm l}} \int_0^{s(t)} w(x,t) {\rm d}x X(t)  \right)  ,
\end{align}
where $g(\ep) := \fr{c}{4} - \fr{\ep}{4 L} - \frac{\ep^2 }{\alpha_{{\rm l}}} \left( 3 +  \fr{32 c L^2}{\alpha_{{\rm l}}} \right)$. Since $g(0) = \fr{c}{4}>0$ and $g'(\ep) = - \fr{1}{4 L} - \frac{2\ep }{\alpha_{{\rm l}}} \left( 3 + \fr{ 32 c L^2}{\alpha_{{\rm l}}} \right) <0$ for all $ \ep>0$, there exists $\ep^*$ such that $g(\ep)>0$ for all $\ep \in (0, \ep^*)$ and $g(\ep^*)=0$. By choosing $\ep = \min \left\{\ep_1,  \ep^* \right \} $, we obtain 
\begin{align}\label{Vdot7}
 \dot{V} (t) \leq &  - \fr{1}{8L^2} || w||^2  - \fr{c\ep }{4 \beta_{\rm l}^2}X(t)^2 \notag\\
 &+\fr{ \left| \dot{s}(t)\right|}{2 \alpha_{{\rm l}} } \left( \left(\frac{\ep}{\beta_{\rm l}} X(t)\right)^2 +  2 \frac{c}{\beta_{\rm l}} \left | \int_0^{s(t)} w(x,t) {\rm d}x X(t) \right| \right) . 
\end{align}
Since $u_{x}(s(t),t)<0$ and $v_x(s(t),t)<0$ by \eqref{flux-valid}, we have 
\begin{align} 
| \dot{s}(t)| \leq - \beta_{\rm l} u_{x}(s(t),t) - \beta_{\rm s} v_x(s(t),t) . 
\end{align} 
Introduce 
\begin{align} \label{ztdef}
z(t) := X(t)+ \fr{\beta_{\rm s}}{\alpha_{{\rm s}}} \int_{s(t)}^{L} v(x,t) {\rm d}x <0,
\end{align}  
where the negativity follows from Lemmas \ref{lem:closed-valid} and \ref{lem:Xt}. Taking the time derivative of \eqref{ztdef} yields 
\begin{align} 
\dot{z}(t) = - \beta_{\rm l} u_{x}(s(t),t) - \beta_{\rm s} v_x(s(t),t) >0,
\end{align}  
where the positivity follows from $u_{x}(s(t),t)<0$ and $v_{x}(s(t),t)<0$ in Lemma \ref{lem:closed-valid}. Applying this inequality and Young's and Cauchy Schwarz inequalities to the last term of \eqref{Vdot7}, we arrive at
\begin{align}\label{Vdot9}
 \dot{V} (t)\leq & - bV (t)+ a \dot{z}(t) V(t), 
\end{align}
where $b = \min \left \{ \fr{\alpha_{{\rm l}}}{4 L^2}, \fr{c}{2} \right\} $, $a = \fr{ 1}{2 \alpha_{{\rm l}} }  \max \left\{  \fr{2 \alpha_{{\rm l}} c^2L}{\ep^2} , 4 \ep \right\} $. Consider the functional $
W(t) = V (t)e^{ - a z(t)} $. 
Taking the time derivative and applying \eqref{Vdot9}, one can deduce 
\begin{align} 
\dot{W} (t)= \left( \dot{V} (t)- a \dot{z}(t) V(t) \right) e^{ - a z(t)} \leq - b W(t) . 
\end{align} 
Hence, $W(t) \leq W_0 e^{-bt} $ is satisfied, which leads to 
\begin{align}\label{Vtexp}
V(t) \leq e^{a (z(t) - z(0)) } V_0 e^{-bt} \leq \delta  V_0 e^{-bt} , 
\end{align}
where $\delta $ is defined as a constant which bounds $\delta > e^{ -a z(0)}$, of which the existence is ensured by Assumptions \ref{initial}-\ref{assum2} and properties proven in the lemmas. Let $V_1(t)$ be the functional defined by 
\begin{align} 
V_1(t) = || u ||^2 = \int_0^{s(t)} u(x,t)^2 {\rm d}x.
\end{align}  
Due to the invertibility of the transformations \eqref{bkst} and \eqref{inv}, there exist positive constants $\underline{M}>0$, $\bar M>0$ such that the following norm equivalence between $(u, X)$-system and $(w, X)$-system holds: 
\begin{align} 
\underline{M} \left(V_1(t) + X(t)^2 \right) \leq V(t) \leq \bar{M} \left(V_1(t) + X(t)^2 \right). 
\end{align} 
Hence, by \eqref{Vtexp}, the following exponential stability estimate of the $(u, X)$-system is shown: 
\begin{align}\label{V1exp} 
V_1(t) + X(t)^2 \leq &  \fr{\bar{M}}{\underline{M}} \delta  \left( V_1(0) + X(0) \right) e^{-bt}. 
\end{align}
\subsection{Stability analysis for the solid phase} \label{sec:stability-solid} 
Let $V_2(t)$ be the functional defined by 
\begin{align} 
V_2 (t) = || v ||^2 = \int_{s(t)}^{L} v(x,t)^2 {\rm d}x.
\end{align}  
Taking the time derivative along the solution of \eqref{vPDE}--\eqref{vst} (note $v(s(t),t) =  0$), and applying Poincare's inequality with the help of $0 < s(t) < L$, we obtain 
\begin{align}
\dot{V}_2 (t) = &- \dot s(t) v(s(t),t)^2 - 2 \alpha_{{\rm s}} \int_{s(t)}^{L} v_{x}(x,t)^2 {\rm d}x \notag\\
\leq & - \fr{\alpha_{{\rm s}}  }{2 (L-s(t))^2} V_2(t) < - \fr{\alpha_{{\rm s}}}{2 L^2} V_2(t) . \label{V2dot}
\end{align}
By comparison principle, the differential inequality \eqref{V2dot} yields
\begin{align}\label{V2exp}
V_2(t) \leq V_2(0) e^{ - \fr{\alpha_{{\rm s}}}{2 L^2} t}. 
\end{align}

As announced in Remark \ref{rem_isidori}, by \eqref{V2exp} we have proven that the inverse dynamics given by \eqref{vPDE}, \eqref{vst}, with $s(t)$ expressed as the solution of \eqref{Xdef} in terms of $X(t)$ is exponentially stable, robustly in the input $X$ of these inverse dynamics. 

\subsection{Stability of overall liquid-interface-solid system} \label{sec:allstability} 

Applying Young's and Cauchy Schwartz inequalities to the square of \eqref{Xdef} with the help of $0<s(t)<L$ yields 
\begin{align} \label{Xineq}
 X(t)^2 \leq & 2 V_3(t) + \fr{2 L \beta_{\rm s}^2}{\alpha_{{\rm s}}^2} V_2(t) , 
\end{align}
where we defined $ V_3(t) = |s(t) - s_{{\rm r}}|^2$. On the other hand, the bound of $V_3(t)$ with respect to $X(t)^2$ and $V_2(t)$ are also obtained in the similar manner to \eqref{Xineq}, which yields  
\begin{align}\label{Ytineq}
 Y(t)   \leq  2 X(t)^2 + \fr{2 L \beta_{\rm s}^2}{\alpha_{{\rm s}}^2} V_2 (t). 
\end{align}
Finally, summing the norms of the liquid temperature, the interface position, and the solid temperature, respectively, and applying \eqref{V1exp}--\eqref{Ytineq},  we can see that there exists a positive constant $M$ such that 
\begin{align} 
V_1(t) + Y(t) + V_2(t) \leq  M  \left( V_1(0) + Y(0) + V_2(0) \right) e^{- \min\left\{ b,  \fr{\alpha_{{\rm s}}}{2 L^2}\right\} t},
\end{align}  
which completes the proof of Theorem \ref{Theo-1}. 

\section{Robustness to Uncertainties of Physical Parameters} \label{sec:robust}
The control design \eqref{Fullcontrol} requires the physical parameters of both the liquid and solid phases, however, in practice these parameters are uncertain. Guaranteeing the robustness of the stability of the closed-loop system with respect to such parametric uncertainties is significant. Suppose that the proposed control law is replaced by 
\begin{align} \label{robust_control} 
q_{{\rm c}}(t)=	-c \bigg( &\frac{k_{{\rm l}}}{\alpha_{{\rm l}}} (1 + \ep_{\rm l})\int_{0}^{s(t)} (T_{{\rm l}}(x,t) - T_{{\rm m}}) {\rm d}x \notag\\
& + \frac{k_{{\rm s}}}{\alpha_{{\rm s}}}(1 + \ep_{\rm s}) \int_{s(t)}^{L} (T_{{\rm s}}(x,t) - T_{{\rm m}}) {\rm d}x 
\notag \\ & +  \gamma (1 + \ep_{\rm f}) (s(t) - s_{{\rm r}}) \bigg), 
\end{align}
where $\ep_{\rm l}$, $\ep_{\rm s}$, and $\ep_{\rm f}$ are the uncertainties of physical parameters satisfying $\ep_{\rm l}>-1$, $\ep_{\rm s} \geq -1$, and $\ep_{\rm f} \geq -1$. We state the following theorem. 
\begin{theorem} \label{thm-robust}
	Under Assumptions \ref{initial}, \ref{ass:E0}, and assuming that the setpoint is chosen to satisfy $q_{{\rm c}}(0) > 0$ with \eqref{robust_control} and $s_{{\rm r}} < L$, consider the closed-loop system consisting of the plant \eqref{sys1}--\eqref{sys5} and the control law \eqref{robust_control}. Then, for any perturbations $(\ep_{\rm l}, \ep_{\rm s}, \ep_{\rm f})$ satisfying 
	\begin{align} \label{robust_cond} 
	\ep_{\rm l} \geq \ep_{\rm f} \geq \ep_{\rm s}, 
	\end{align}
	there exists $R>0$ such that if 
	\begin{align} 
		\left| \frac{ \ep_{\rm f}-\ep_{\rm l}}{1 + \ep_{\rm l}} \right| < R, 
	\end{align}
then the closed-loop system maintains model validity \eqref{valid1}-\eqref{valid3} and the exponential stability at the origin holds for the norm defined in \eqref{Psidef}.  
	\end{theorem}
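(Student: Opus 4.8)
The plan is to mirror the two-step structure of the proof of Theorem~\ref{Theo-1} — first model validity together with the flux signs, then backstepping plus Lyapunov — while tracking how the mismatches $(\ep_{\rm l},\ep_{\rm s},\ep_{\rm f})$ propagate. The preliminary step is to rewrite the perturbed law \eqref{robust_control} in the reduced coordinates $(u,X)$ of Section~\ref{sec:stability}. Using $\gamma(s-s_{\rm r})=\gamma X-\fr{k_{\rm s}}{\alpha_{\rm s}}\int_{s(t)}^{L}v\,{\rm d}x$ and $\gamma=k_{\rm l}/\beta_{\rm l}$, the control \eqref{robust_control} becomes exactly the nominal control \eqref{qcfbk2} with $c$ replaced by $\hat c:=c(1+\ep_{\rm l})$ and the $X$-term additionally multiplied by $\mu:=\fr{1+\ep_{\rm f}}{1+\ep_{\rm l}}$, plus a residual forcing $-c\fr{k_{\rm s}}{\alpha_{\rm s}}(\ep_{\rm s}-\ep_{\rm f})\int_{s(t)}^{L}v\,{\rm d}x$ carried in from the solid phase. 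Two observations drive the rest: $|\mu-1|=\left|\fr{\ep_{\rm f}-\ep_{\rm l}}{1+\ep_{\rm l}}\right|$ is precisely the quantity bounded by the hypothesis $R$, and the residual forcing has a sign fixed by \eqref{robust_cond}.

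For model validity I would differentiate \eqref{robust_control} along the plant. Using the energy balance $\dot E=q_{\rm c}$ from \eqref{energy} termwise, one gets $\dot q_{\rm c}(t)=-\hat c\,q_{\rm c}(t)+c(\ep_{\rm f}-\ep_{\rm l})k_{\rm l}\fr{\pa T_{\rm l}}{\pa x}(s(t),t)+c(\ep_{\rm s}-\ep_{\rm f})k_{\rm s}\fr{\pa T_{\rm s}}{\pa x}(s(t),t)$. On the maximal interval where $s(t)\in(0,L)$, Lemma~\ref{valid} supplies a classical solution, Hopf's lemma gives $\fr{\pa T_{\rm l}}{\pa x}(s(t),t)\le0$ and $\fr{\pa T_{\rm s}}{\pa x}(s(t),t)\le0$ as in \eqref{flux-valid}, and then condition \eqref{robust_cond} makes both perturbation terms nonnegative, so $\dot q_{\rm c}\ge-\hat c\,q_{\rm c}$ and hence $q_{\rm c}(t)\ge q_{\rm c}(0)e^{-\hat c t}>0$ by the assumption $q_{\rm c}(0)>0$. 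Feeding $q_{\rm c}\ge0$ back into Lemma~\ref{valid} together with $0<s_\infty<L$ and $s_{\rm r}<L$, and running a continuation argument in $\overline t$ as in Lemma~\ref{lem:closed-valid} to control the total energy, extends model validity \eqref{valid1}--\eqref{valid3} to all $t\ge0$; in particular $X(t)\le0$ exactly as in Lemma~\ref{lem:Xt}.

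For the stability estimate I would keep the reduction of Section~\ref{sec:stability}: the solid-phase estimate \eqref{V2exp} is control-independent and survives verbatim, giving exponential decay of $\|v(\cdot,t)\|^2$ and hence of $\left|\int_{s(t)}^{L}v\,{\rm d}x\right|\le\sqrt L\,\|v\|$. Apply the backstepping transformation \eqref{bkst}--\eqref{inv} with $c$ replaced by $\hat c$ throughout $\phi$ and $\psi$. The target system is \eqref{eq:tarPDE1}--\eqref{eq:tarODE1} with $c\mapsto\hat c$, except that the Neumann condition at $x=0$ now picks up two extra terms relative to \eqref{eq:tarBC21}: one equal to $\fr{\hat c(\mu-1)}{\beta_{\rm l}}X(t)$ and one proportional to $(\ep_{\rm s}-\ep_{\rm f})\int_{s(t)}^{L}v\,{\rm d}x$. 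Evaluating the Lyapunov functional \eqref{Vdef} along this modified target system reproduces \eqref{Vdot1} plus two new boundary contributions; I would estimate them with Young's and Agmon's inequalities so that the $(\mu-1)$-term yields coefficients of order $|\mu-1|$ in front of $\|w_x\|^2$ and $X(t)^2$, while the solid residual, thanks to its favorable sign and the decay of $\|v\|$, is dominated by a small multiple of $\|w_x\|^2$ plus $C\|v(\cdot,t)\|^2$. Choosing $R$ small enough (in terms of $c,L,\alpha_{\rm l},\beta_{\rm l}$ and the backstepping parameter $\ep$) that these extra $\|w_x\|^2$ and $X^2$ terms are absorbed into the negative terms of \eqref{Vdot7} leaves $\dot V\le-bV+a\dot z(t)V+Ce^{-\lambda t}$ with $z$ as in \eqref{ztdef}; then $W:=V e^{-az}$ obeys $\dot W\le-bW+C'e^{-\lambda t}$ since $z$ is bounded, which integrates to an exponential bound on $V$, hence on $\|u\|^2+X(t)^2$. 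Combining with \eqref{V2exp} and the norm equivalences \eqref{Xineq}--\eqref{Ytineq} gives \eqref{H1norm} for $\Psi$.

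I expect the model-validity step to be the main obstacle. In the nominal case Lemma~\ref{lem:closed-valid} exploited the closed form $q_{\rm c}(t)=c\gm(s_{\rm r}-s_\infty)e^{-ct}$ to verify \eqref{validcondition} directly; here no such formula is available, so the upper bound in \eqref{validcondition} (equivalently $E(t)<\gm L$) must be obtained by interleaving the positivity $q_{\rm c}\ge0$, energy conservation, and the exponential estimate on $\Psi$ restricted to the maximal validity interval, and one must verify that the constant $M$ coming out of the Lyapunov analysis does not degenerate as $\overline t\to\infty$. A lesser, purely technical difficulty is the joint choice of $\ep$ and the threshold $R$ so that every new term produced in $\dot V$ is dominated.
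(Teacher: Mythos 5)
Most of your plan coincides with the paper's proof: the rewriting of \eqref{robust_control} in $(u,X)$ coordinates, the differential inequality $\dot q_{\rm c}(t)\ge -c(1+\ep_{\rm l})q_{\rm c}(t)$ obtained by differentiating the feedback law and combining Hopf's lemma with the ordering \eqref{robust_cond}, and the entire stability half (backstepping with $c$ replaced by $c(1+\ep_{\rm l})$, a perturbation in the $x=0$ Neumann condition made of a term proportional to $\fr{\ep_{\rm f}-\ep_{\rm l}}{1+\ep_{\rm l}}X(t)$ and a term proportional to $(\ep_{\rm s}-\ep_{\rm f})\int_{s(t)}^{L}v\,{\rm d}x$, absorption via Young's and Agmon's inequalities under a smallness condition on $\bigl|\fr{\ep_{\rm f}-\ep_{\rm l}}{1+\ep_{\rm l}}\bigr|$, and treating $\|v\|^2$ as an exponentially decaying forcing thanks to \eqref{V2dot}) are exactly the paper's steps.

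The genuine gap is in the model-validity continuation, precisely where you yourself flag the ``main obstacle.'' You propose to recover the upper bound $s(t)<L$ (equivalently $E(t)<\gm L$ in \eqref{validcondition}) by interleaving energy conservation with the exponential estimate on $\Psi$ on the maximal validity interval. That route does not close: the Lyapunov analysis only yields $(s(t)-s_{\rm r})^2\le M\Psi(0)e^{-dt}$ with an overshoot constant that is not guaranteed to satisfy $M\Psi(0)\le(L-s_{\rm r})^2$, so it cannot preclude $s$ reaching $L$ for general (global) initial data; moreover that estimate itself presupposes \eqref{valid1}--\eqref{valid3} and the flux signs \eqref{flux-valid}, so the interleaving is circular unless something else supplies the bound. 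The paper sidesteps \eqref{validcondition} entirely with an elementary contradiction argument you are missing: suppose $q_{\rm c}>0$ on $[0,t^*)$ and $q_{\rm c}(t^*)=0$, and let $\bar t$ be the exit time of $s$ from $(0,L)$. Then $s(t)>s_{\infty}>0$ rules out $s(\bar t)=0$, while $s(\bar t)=L$ is excluded by evaluating the feedback law \eqref{robust_control} at that instant: under \eqref{valid1}--\eqref{valid2} the liquid integral contributes nonpositively to $q_{\rm c}$, the solid integral vanishes as $s\to L$, and $\gm(1+\ep_{\rm f})(L-s_{\rm r})\ge 0$ since $s_{\rm r}<L$, forcing $q_{\rm c}(\bar t)\le 0$ and contradicting positivity; hence $\bar t=t^*$, and then your inequality $q_{\rm c}(t)\ge q_{\rm c}(0)e^{-c(1+\ep_{\rm l})t}$ contradicts $q_{\rm c}(t^*)=0$. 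This propagates $q_{\rm c}\ge 0$ and \eqref{valid1}--\eqref{valid3} for all $t\ge 0$ without any appeal to the stability estimate; substituting this argument for your energy/Lyapunov interleaving, the remainder of your proposal goes through as in the paper.
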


Theorem \ref{thm-robust} implies that if we know lower and upper bounds of the physical parameters as $\underline{k}_{\rm l} \leq k_{{\rm l}} \leq \overline{k}_{\rm l}$, $\underline{\alp}_{\rm l} \leq \alp_{\rm l} \leq \overline{\alp}_{\rm l}$, and $\underline{\gm} \leq \gm \leq \overline{\gm}$, then the most conservative choice of the control law to satisfy the condition \eqref{robust_cond} is given by 
\begin{align} 
q_{{\rm c}}(t) = - c \left(	\frac{\overline{k}_{\rm l}}{\underline{\alpha}_{\rm l}} \int_{0}^{s(t)} (T_{{\rm l}}(x,t) - T_{{\rm m}}) {\rm d}x +  \underline{\gamma} (s(t) - s_{{\rm r}}) \right),  
\end{align}
which does not incorporate the solid phase temperature. This design requires less information than the exact feedback design \eqref{Fullcontrol}, however, the conditions $q_{{\rm c}}(0)> 0$ and $s_{{\rm r}} < L$, which lead to 
\begin{align} 
	s_0 + \frac{\overline{k}_{\rm l}}{\underline{\alpha}_{\rm l}\underline{\gamma}} \int_{0}^{s_0} (T_{{\rm l},0}(x) - T_{{\rm m}}) {\rm d}x  < s_{{\rm r}} < L, 
\end{align}
are more restrictive than Assumption \ref{assum2} for the unperturbed design \eqref{Fullcontrol}, which causes a tradeoff between the parameters' uncertainty and the restriction of the setpoint.   

The proof of Theorem \ref{thm-robust} is established by following similar steps to Section \ref{sec:stability}.  

\begin{proof} 
First, we derive an analogous result on the properties of the closed-loop system to Lemma \ref{lem:closed-valid} by employing contradiction approach twice. Assume that there exists a finite time $ t^*>0$ such that $q_{{\rm c}}(t) >0$ for all $t \in [0, t^*)$ and $q_{{\rm c}}(t^*) = 0$. Then, by Lemma \ref{valid}, 
for $t \in (0, \bar t)$ where $\bar t:= \sup_{t\in(0,t^*)}\{t|s(t) \in (0,L)\}$, the solution exists and unique with satisfying \eqref{valid1}--\eqref{valid3}. If $\bar t < t^*$, then it implies $s(\bar t) = 0$ or $s(\bar t) = L$ hold. However, under Assumption \ref{ass:E0} and $q_{{\rm c}}(t) >0$ for all $t \in [0, t^*)$, $s(t) > s_{\infty}>0$ holds for all $t \in (0, t^*)$, and hence $s(\bar t) \neq 0$. Moreover, applying $q_{{\rm c}}(t) > 0$ and \eqref{valid1} and \eqref{valid2} for all $t \in (0, \bar t)$ to the feedback design \eqref{robust_control}, one can see that $s(\bar t) \neq L$. Hence, $\bar t = t^*$. Taking the time derivative of the control law \eqref{robust_control}, we get the following differential equation: 
\begin{align} 
\dot{q}_{\rm c}(t)=& - c (1 + \ep_{\rm l}) q_{{\rm c}}(t)	 - (\ep_{\rm l}-\ep_{\rm f}) c k_{{\rm l}} \frac{\pa T_{{\rm l}}}{\pa x}(s(t),t) \notag\\
&+ (\ep_{\rm s} - \ep_{\rm f}) c k_{{\rm s}} \frac{\pa T_{{\rm s}}}{\pa x}(s(t),t), \label{robust_qcder}\\
\geq  & - c (1 + \ep_{\rm l}) q_{{\rm c}}(t), \quad \forall t \in (0, t^*) , \label{robust_qcineq}
\end{align}
where the inequality from \eqref{robust_qcder} to \eqref{robust_qcineq} follows from \eqref{robust_cond} and Hopf's lemma with the help of \eqref{valid1} and \eqref{valid2} for all $ t \in (0, t^*)$. Therefore, applying comparison principle to \eqref{robust_qcineq}, one can show that $q_{{\rm c}}(t) > q_{{\rm c}}(0) e^{-ct}$ for all $t \in (0, t^*)$, which leads to the contradiction with the imposed assumption $q_{{\rm c}}(t^*) = 0$. Thus, there does not exist such $t^*$, from which we conclude $q_{{\rm c}}(t) \geq 0$ for all $ t \geq 0$ and the well-posedness and the conditions \eqref{valid1}--\eqref{valid3} holds for all $t \geq 0$.

Next, we prove the stability of the perturbed closed-loop in the similar manner as the proof of Theorem \ref{Theo-1}. Let $\bar c = c (1 + \ep_{\rm l})$, and redefine the gain kernel function as $\phi = \frac{1}{\beta_{\rm l}} (\bar cx- \ep) $ associated with the backstepping transformation \eqref{bkst}. Then, the target systems is described as  
\begin{align}\label{robust-tarPDE1}
w_t(x,t)=&\alpha_{{\rm l}} w_{xx}(x,t)+ \frac{\bar c}{\beta_{\rm l}}\dot{s}(t) X(t), \\
\label{robust-tarBC11} w(s(t),t) =& \frac{\ep}{\beta_{\rm l}} X(t), \\
\label{robust-tarODE1}\dot{X}(t)=&- \bar cX(t)- \beta_{\rm l}w_x(s(t),t), 
\end{align}
and the boundary condition at $x = 0$ is given by 
\begin{align} \label{robust-tarBC22}
w_{x}(0,t) = & - \frac{\ep }{\alpha_{{\rm l}}}  u(0,t) +d(t) , 
\end{align} 
where $d(t)$ is the perturbation caused by the parametric uncertainties, given by 
\begin{align} 
d(t) = 	\frac{\ep_{\rm s}-\ep_{\rm f}}{1 + \ep_{\rm l}} \fr{ \bar c k_{\rm s}}{k_{\rm l} \alp_{\rm s}} \int_{s(t)}^L v(x,t) {\rm d}x + \frac{\ep_{\rm f}-\ep_{\rm l}}{1 + \ep_{\rm l}} \frac{\bar c }{k_{\rm l}} X(t). \label{dtdef}
\end{align}
We consider the Lyapunov function defined by \eqref{Vdef}. Using the same technique as the derivation of \eqref{Vdot4}, the time derivative of $V (t)= \fr{1}{2\alpha_{{\rm l}} } || w ||^2  + \fr{\ep}{2 \beta_{\rm l}^2} X(t)^2$ along the perturbed target system \eqref{robust-tarPDE1}--\eqref{robust-tarBC22} satisfies the following inequality 
\begin{align} 
\dot{V} \leq & - \left( 1 - \frac{2 \ep L}{\alpha_{{\rm l}}} \left( 3 + \fr{32 \bar c L^2}{\alpha_{{\rm l}}} \right) \right) || w_{x}||^2 \notag\\
 & - \frac{\ep}{\beta_{\rm l}^2} \left(\fr{\bar c}{2} - \frac{ \ep^2}{\alpha_{{\rm l}}} \left( 3 + \fr{32 \bar c L^2}{\alpha_{{\rm l}}} \right) \right)X(t)^2  - w(0,t) d(t), \notag\\
&+\fr{ \dot{s}(t)}{2 \alpha_{{\rm l}}} \left( \left(\frac{\ep}{\beta_{\rm l}} X(t)\right)^2 +  2 \frac{ \bar c}{\beta_{\rm l}} \int_0^{s(t)} w(x,t) {\rm d}x X(t)  \right) . \label{robust_Vdot4}
\end{align} 
Applying Young's and Agmon's inequalities, the perturbation is bounded by  
\begin{align} 
	- w(0,t) d(t) \leq & \frac{1}{8L} w(0,t)^2 + 2L d(t)^2, \notag\\
	\leq &\frac{1}{4 L} w(s(t),t)^2 + \frac{1}{2} ||w_{x}||^2 + 2 L d(t)^2 , \notag\\
	\leq & \frac{\ep^2}{4 L} X(t)^2 + \frac{1}{2} ||w_{x}||^2 + 2 L d(t)^2. \label{Y-ag}
\end{align}
Moreover, applying Young's and Cauchy Schwarz inequalities to the square of \eqref{dtdef}, we get 
\begin{align} 
	d(t)^2 = & 2	\left(\frac{\ep_{\rm s}-\ep_{\rm f}}{1 + \ep_{\rm l}} \fr{ \bar c k_{\rm s}}{k_{\rm l} \alp_{\rm s}} \right)^2 L \int_{s(t)}^L v(x,t)^2 {\rm d}x \notag\\
	& + 2 \left(\frac{\ep_{\rm f}-\ep_{\rm l}}{1 + \ep_{\rm l}} \frac{\bar c }{k_{\rm l}} \right)^2X(t)^2. \label{Y-cau}
\end{align}
Applying \eqref{Y-ag} and \eqref{Y-cau} to \eqref{robust_Vdot4}, we can see that there exists sufficiently small $\ep >0$ such that the following inequality holds 
\begin{align}
 \dot{V} \leq &  - \fr{1}{16L^2} || w||^2  - \bar c \left( \frac{\ep }{4 \beta_{\rm l}^2} - \frac{4 L \bar c }{ k_{\rm l}^2} \left| \frac{ \ep_{\rm f}-\ep_{\rm l}}{1 + \ep_{\rm l}} \right|^2  \right)X(t)^2 \notag\\
 & +4 L^2 \left(\frac{\ep_{\rm s}-\ep_{\rm f}}{1 + \ep_{\rm l}} \fr{ \bar c k_{\rm s}}{k_{\rm l} \alp_{\rm s}} \right)^2 \int_{s(t)}^L v(x,t)^2 {\rm d}x  \notag\\
&+\fr{ \dot{s}(t)}{2 \alpha_{{\rm l}}} \left( \left(\frac{\ep}{\beta_{\rm l}} X(t)\right)^2 +  2 \frac{c}{\beta_{\rm l}} \int_0^{s(t)} w(x,t) {\rm d}x X(t)  \right)  .  \label{robust-Vineq9} 
\end{align}
Therefore, if
\begin{align} 
	\left| \frac{ \ep_{\rm f}-\ep_{\rm l}}{1 + \ep_{\rm l}} \right|^2 < \frac{\ep k_{\rm l}^2}{32 \beta_{\rm l}^2 L \bar c} , 
\end{align}
then the differential inequality \eqref{robust-Vineq9} is led to 
\begin{align} 
\dot{V} \leq &  - \fr{1}{16L^2} || w||^2  -  \frac{\ep \bar c }{8 \beta_{\rm l}^2} X(t)^2 \notag\\
 & +4 L^2 \left(\frac{\ep_{\rm s}-\ep_{\rm f}}{1 + \ep_{\rm l}} \fr{ \bar c k_{\rm s}}{k_{\rm l} \alp_{\rm s}} \right)^2 \int_{s(t)}^L v(x,t)^2 {\rm d}x  \notag\\
&+\fr{ \dot{s}(t)}{2 \alpha_{{\rm l}}} \left( \left(\frac{\ep}{\beta_{\rm l}} X(t)\right)^2 +  2 \frac{c}{\beta_{\rm l}} \int_0^{s(t)} w(x,t) {\rm d}x X(t)  \right)  . \label{robust_Vdotfin} 
\end{align}
Since $v$-system is equivalent to the one in previous sections, the time derivative of $V_2 = ||v||^2$ satisfies the inequality \eqref{V2dot}, which is 
\begin{align}
\dot{V}_2 \leq - \fr{\alpha_{{\rm s}}}{2 L^2} V_2 .	
\end{align}
Combining \eqref{V2dot} and \eqref{robust_Vdotfin} with applying comparison principle, one can derive that there exist positive constants $M_1>0$ and $d_1>0$ such that the following decay of the norm holds 
\begin{align} 
V(t) + V_2(t) \leq M_1 (V(0) + V_2(0)) e^{-d_1 t}. 
\end{align}
Using the procedure in Section \ref{sec:allstability}, we conclude Theorem \ref{thm-robust}. 
\end{proof}

\section{Numerical Simulation} \label{sec:simulation} 
\begin{table}[t]
\vspace{2mm}
	
\caption{Physical properties of zinc}
\begin{center}
    \begin{tabular}{| l | l | l | }
    \hline
    $\textbf{Description}$ & $\textbf{Symbol}$ & $\textbf{Value}$ \\ \hline
    Liquid density & $\rho_{l}$ & 6570 ${\rm kg}\cdot {\rm m}^{-3}$\\ 
    Solid density & $\rho_{s}$ & 6890 ${\rm kg}\cdot {\rm m}^{-3}$\\ 
    Liquid heat capacity & $c_{l}$ & 390 ${\rm J} \cdot {\rm kg}^{-1}\cdot {\rm K}^{-1}$  \\  
    Solid heat capacity & $c_{s}$ & 390 ${\rm J} \cdot {\rm kg}^{-1}\cdot {\rm K}^{-1}$  \\  
    Liquid thermal conductivity & $k_{{\rm l}}$ & 130 ${\rm W}\cdot {\rm m}^{-1}$  \\ 
    Solid thermal conductivity & $k_{{\rm s}}$ & 100 ${\rm W}\cdot {\rm m}^{-1}$  \\ 
    Melting temperature & $T_{m}$ & 420 $^\circ{\rm C}$  \\ 
    Latent heat of fusion & $\Delta H^*$ & 120,000 ${\rm J}\cdot {\rm kg}^{-1}$ \\ \hline 
        \end{tabular}
\end{center}
\end{table}

Simulation results are performed by considering a strip of zinc whose physical properties of both liquid and solid phases are given in Table 1. The numerical model of the two-phase Stefan problem is obtained by boundary immobilization method combined with finite difference semi-discretization following \cite{kutluay97}. Under the identical choice of the physical parameters in the plant and control, we compare the performance of the proposed design in \eqref{Fullcontrol} (hereafter ``two-phase design") and the following design
\begin{align} \label{op-design}
q_{{\rm c}}(t) = - c \left(	\frac{k_{\rm l}}{\alpha_{{\rm l}}} \int_{0}^{s(t)} (T_{{\rm l}}(x,t) - T_{{\rm m}}) {\rm d}x + \gamma (s(t) - s_{{\rm r}}) \right),  
\end{align}
which is developed in our previous work in \cite{Shumon19journal} for the one-phase Stefan problem (hereafter ``one-phase design"). The stability under the ``one-phase design" is guaranteed by Theorem \ref{thm-robust} for the robustness analysis of the closed-loop system under the restriction of the setpoint to satisfy $q_{{\rm c}}(0) \geq 0$ for \eqref{op-design}. 

The material's length, the initial interface position, and the setpoint position are chosen as $L$ = 1.0 m, $s_0$ = 0.4 m, and $s_{{\mathrm r}}$ = 0.5 m. The initial temperature profiles are set as $T_{{\rm l},0}(x)= \bar{T}_{\rm l,0}(1-x/s_0) + T_{{\mathrm m}}$ and $T_{{\rm s},0}(x) = \bar{T}_{\rm s,0}(1-(L-x)/(L-s_0)) + T_{{\mathrm m}}$ with $ \bar{T}_{\rm l,0}$ = 10 $^\circ$C and $\bar{T}_{\rm s,0}$ = --200 $^\circ$C, of which the schematic is shown in Fig. \ref{fig:initial}. Then, the setpoint restrictions for both ``two-phase design" and ``one-phase design" are satisfied. The control gain is set as $c =$ 1.0 $\times$ 10$^{-2}$/s.  

\begin{figure}[t]
\centering 
{\includegraphics[width=3.0in]{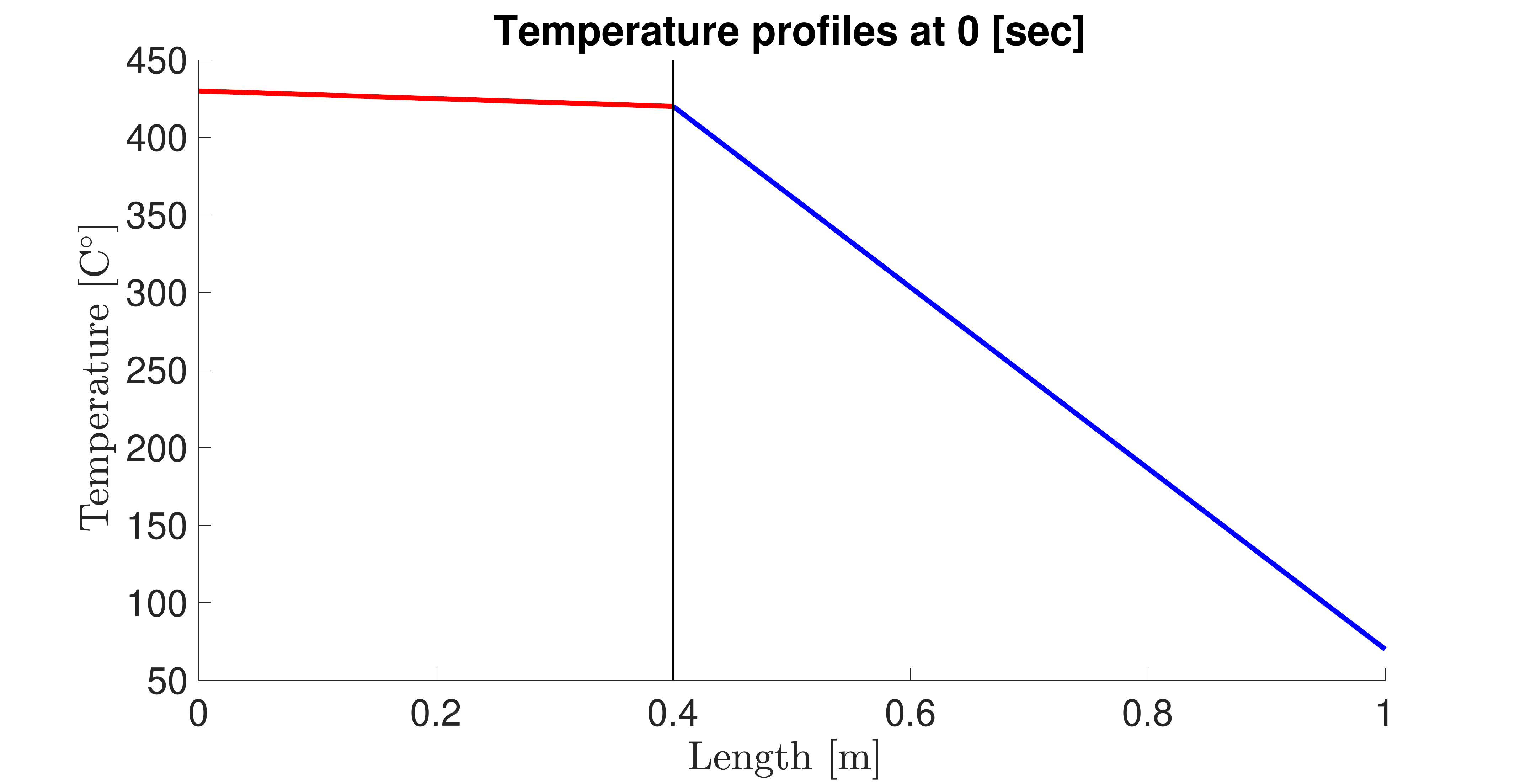}}\\
\caption{The initial temperature profiles on both liquid (red) and solid (blue).} \label{fig:initial}
\end{figure} 

\begin{figure}  
\centering 
\subfloat[Convergence of the interface to the setpoint $s_{{\rm r}}$ is observed for both controls, however, the proposed two-phase design achieves faster convergence as seen in the settling time in Fig. \ref{settling}. ]
{\includegraphics[width=3.0in]{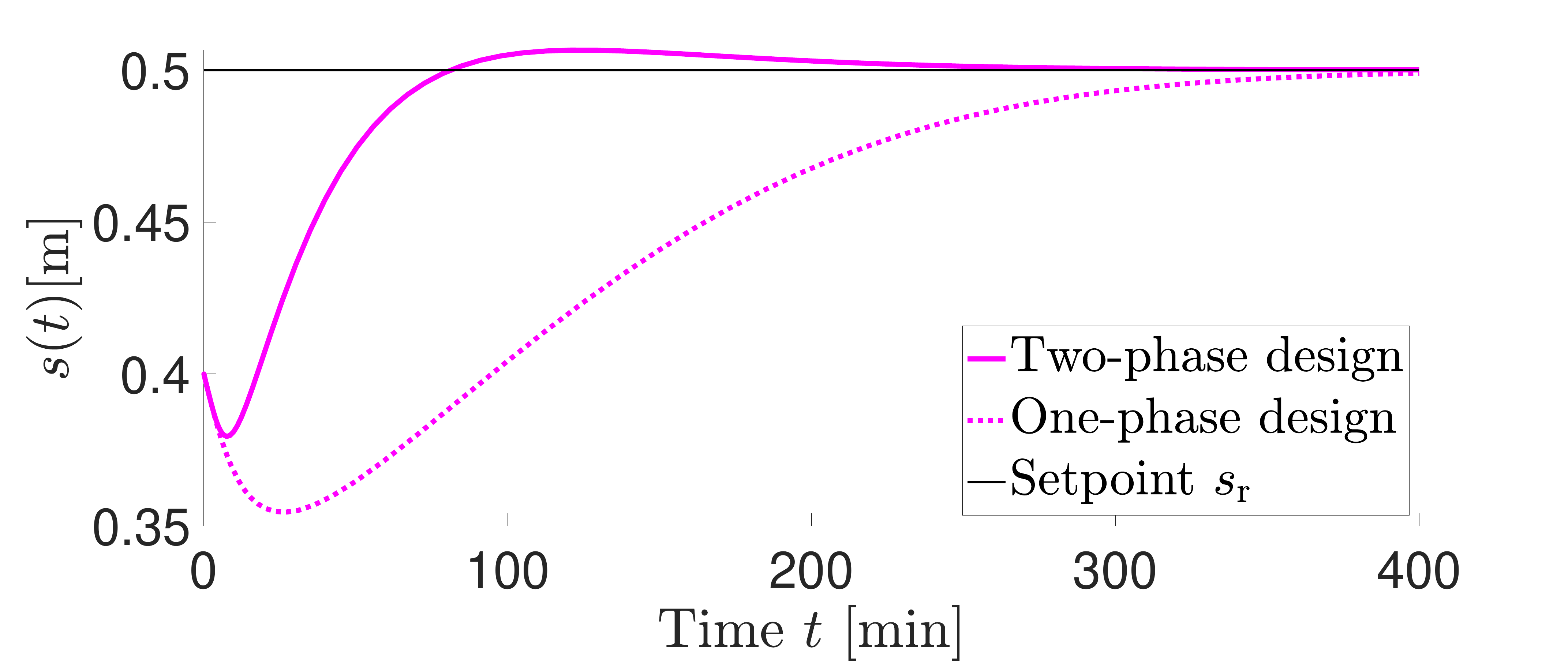}\label{fig:interface}}\\
\subfloat[Positivity of the heat input is satisfied for both controls.]
{\includegraphics[width=3.0in]{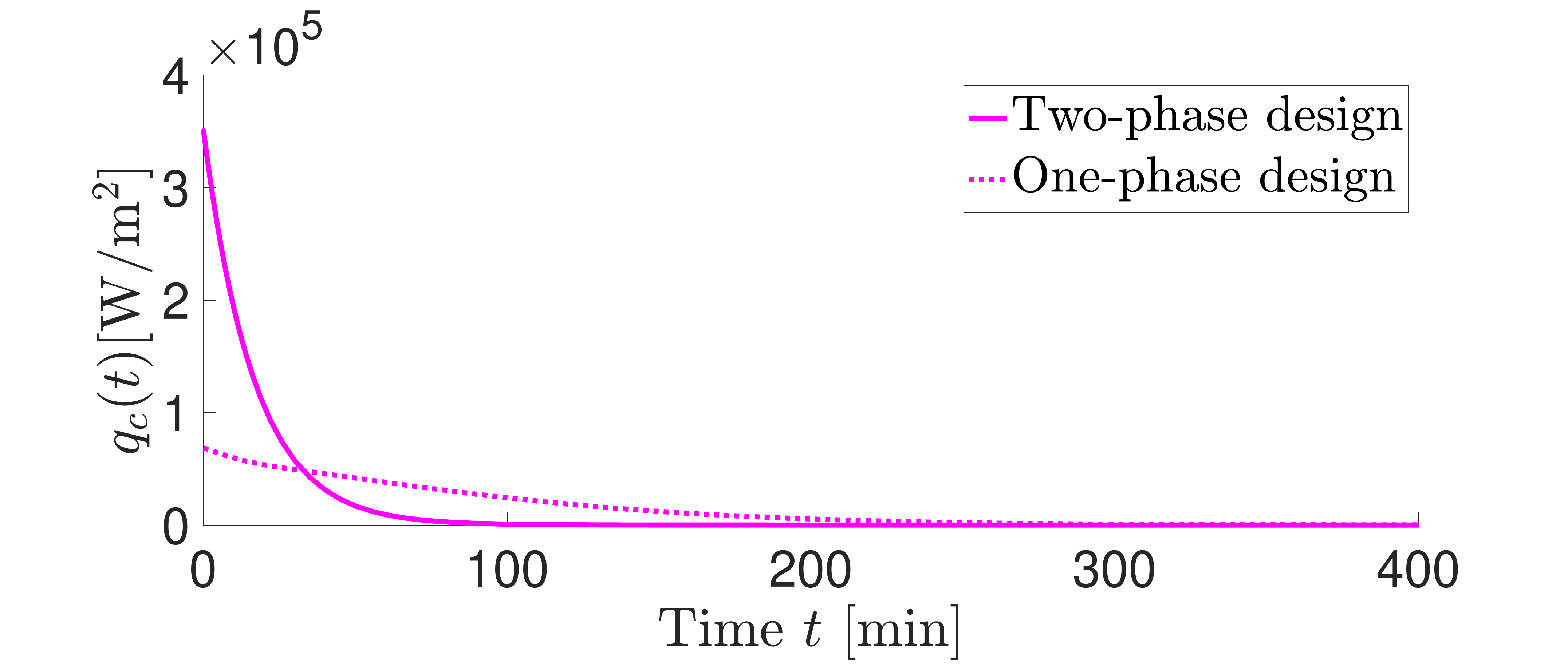}\label{fig:qc}}\\
\subfloat[The boundary temperature maintains above the melting temperature, and hence there is no appearance of a new solid phase from the controlled boundary $x=0$ in the liquid phase.]
{\includegraphics[width=3.0in]{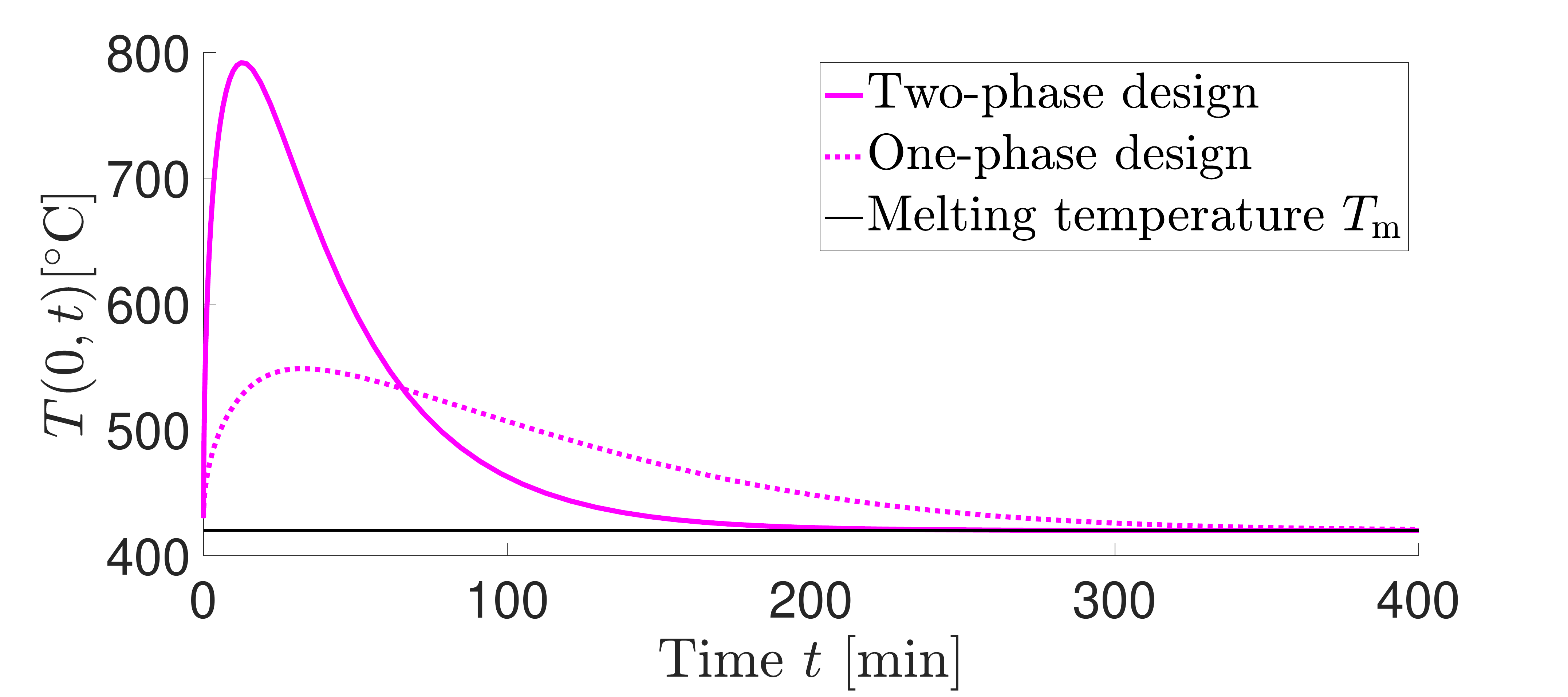}\label{fig:temp}}
\caption{The closed-loop responses under the proposed ``two-phase" design (pink solid) and the ``one-phase" design (pink dash).} 
\label{fig:main} 
\vspace{5mm} 
{\includegraphics[width=3.0in]{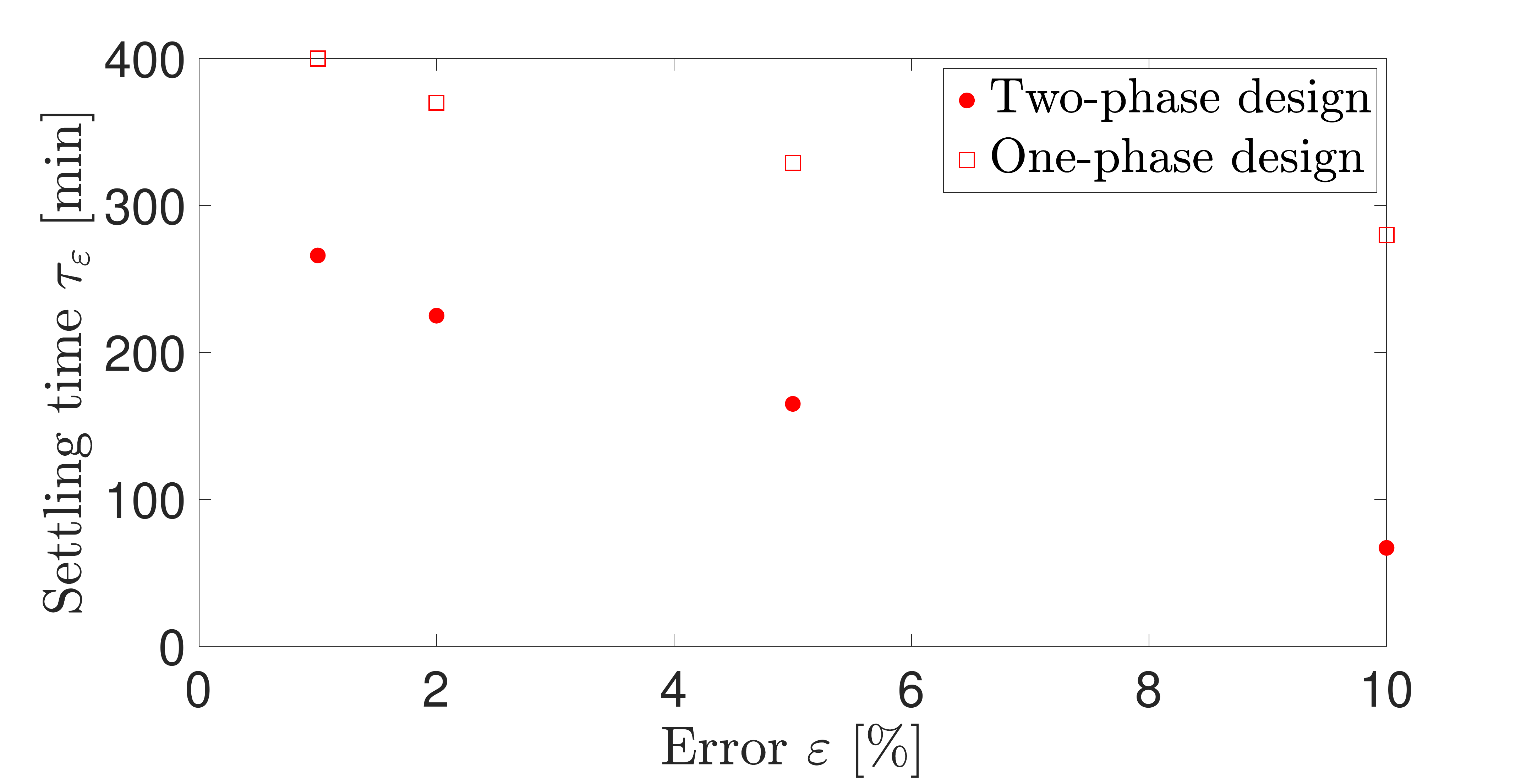}}
\caption{Settling time of the interface convergence in Fig. \ref{fig:main} (a). }
\label{settling} 
\end{figure}

The closed-loop responses are implemented as depicted in Fig \ref{fig:interface}-\ref{fig:temp} for both ``two-phase design" (solid) and ``one-phase design" (dash). Fig \ref{fig:interface} shows the dynamics of the interface $s(t)$. We can observe that $s(t)$ decreases at first due to the freezing caused by the initial temperature of the solid phase, and after some time the interface position increases and converges to the setpoint owing to the melting heat input. Moreover, the interface dynamics under the ``two-phase design" achieves faster convergence than that under the ``one-phase design" with having a little overshoot as seen from Fig. \ref{fig:interface}. Fig \ref{fig:qc} shows the dynamics of the closed-loop control, and Fig \ref{fig:temp} shows the dynamics of the boundary temperature of the liquid phase $T_{{\rm l}}(0,t)$. Fig \ref{fig:qc} illustrates the positivity of the heat input $q_{{\rm c}}(t)>0$, and Fig. \ref{fig:temp} illustrates the liquid boundary temperature being greater than the melting temperature, which are consistent with Lemma \ref{lem:closed-valid}. Hence, we can observe that the simulation results are consistent with the theoretical result we prove as model validity conditions and the stability analysis. 

To compare the performance on the convergence speed between ``two-phase design" and ``one-phase design," we investigate the settling time $\tau_{\ep}$ with respect to the error $\ep$ [\%] of the interface position relative to the setpoint, mathematically defined by 
\begin{align} 
\tau_{\ep} := \inf_{\tau \geq 0} \left\{ \tau \bigg| |s(t) - s_{{\rm r}}| \leq |s_0 - s_{{\rm r}}| \frac{\ep}{100}, \quad \forall t \geq \tau \right\} . 
\end{align} 
Fig. \ref{settling} shows the value of $\tau_{\ep}$ with $\ep = $ 10, 5, 2, 1 [\%]. From the figure, it is observed that the convergence speed of ``two-phase design" compared to the speed of the ``one-phase design" is approximately four times faster for $\ep$ = 10[\%], two times faster for $\ep$ = 5 [\%], one and half times faster for both $\ep$ = 2 [\%] and 1 [\%], respectively. Hence, Fig. \ref{settling} validates superior performance of the proposed ``two-phase design" compared to the ``one-phase design" developed in our previous work \cite{Shumon19journal}.

\section{Conclusion and Future work} \label{sec:conclusion} 
In this paper we presented the full state feedback control law of a single heat boundary input for the two-phase Stefan problem to stabilize the moving interface position at a desired setpoint. The main contribution of the paper is that we theoretically prove the global exponential stability of the closed-loop system of the two-phase Stefan problem with designing the state feedback control law by employing energy shaping and backstepping. While our present result is only on the stabilization of the moving interface at the setpoint with restricting the equillibrium temperature to only the uniform melting temperature, the simultaneous stabilization of the interface position and the temperature profile at arbitrary setpoint and temperature profiles linear in space following recent results in \cite{yu19_shock} for traffic congestion control with moving shockwave is considered as our future work. The application of extremum seeking control for online optimization of static maps to the Stefan problem following the recent results of \cite{Feiling18} is also a potential direction. 

\section*{Acknowledgement} 
The authors gratefully acknowledge the funding support by National Science Foundation (NSF Award Number:1562366). 

\section*{References}

\bibliography{mybibfile}

\end{document}